\documentclass[12pt,reqno]{amsart}
\usepackage[margin=1in]{geometry}

\pdfoutput=1

\usepackage{amsthm}
\usepackage{amssymb}
\usepackage{mathrsfs}
\usepackage{enumerate}
\usepackage{mathtools}
\usepackage[pdftex,dvipsnames,table,x11names]{xcolor}
\definecolor{darkred}{RGB}{139,0,0}
\definecolor{darkblue}{RGB}{0,0,139}
\definecolor{darkgreen}{RGB}{0,100,0}
\definecolor{darkmagenta}{RGB}{139,0,120}
\usepackage{enumitem}
\usepackage{tikz}
\usepackage{tikz-cd}
\usetikzlibrary{matrix,fit,calc,arrows,decorations.markings,3d,hobby,patterns}

\usepackage{booktabs}

\usepackage[linktocpage]{hyperref}
\hypersetup{
  colorlinks   = true,
  urlcolor     = darkred,
  linkcolor    = darkred,
  citecolor   = darkgreen}

\usepackage[noabbrev,capitalise]{cleveref}
\usepackage{adjustbox}

\setenumerate[1]{label=(\roman*),}

\setitemize[1]{label=\raisebox{0.25ex}{\tiny$\bullet$}}

\newif\ifarxiv{}

\newcommand{\myAcknowledgements}{The authors thank Jan Jendrysiak for helpful discussions. M.K. and P.S. acknowledge the Dagstuhl Seminar 23192 “Topological Data
Analysis and Applications” that initiated this collaboration.}
\newcommand{\myFunding}{Á.J.A. and M.K. research has been supported by the Austrian Science Fund (FWF) grant P 33765-N.}

\newcommand{\define}[1]{{\textit{#1}}}
\newcommand{\deff}[1]{{\define{#1}}}

\newcommand{\R}{\mathbb{R}}

\newcommand{\Rplus}{\mathbb{R}_+}
\newcommand{\N}{\mathbb{N}}

\newcommand{\eps}{\varepsilon}

\newcommand{\isomorphic}{\ensuremath{\cong}}

\usepackage{xspace}

\usepackage{mathrsfs}

\newcommand{\pointset}{S}

\DeclareMathOperator*{\diam}{diam}

\newcommand{\Prob}{\mathbb{P}}

\newcommand{\distribution}[1]{\mathrm{#1}}
\newcommand{\process}{\mathcal{P}}
\DeclareMathOperator{\Vol}{Vol}
\DeclarePairedDelimiter\abs{\lvert}{\rvert}
\DeclarePairedDelimiter\norm{\lVert}{\rVert}

\newcommand{\cech}{\mathcal{C}}
\newcommand{\rips}{\mathcal{R}}

\newcommand{\offset}{\mathcal{O}}
\newcommand{\degree}{\mathcal{D}}
\newcommand{\degreecech}{\mathcal{DC}}

\newcommand{\degreeoffset}{\degree\offset}
\newcommand{\cover}{\mathrm{Cov}}
\newcommand{\cross}{O}
\newcommand{\op}{\text{op}}

\newcommand\sbullet[1][.5]{\mathbin{\hbox{\scalebox{#1}{$\bullet$}}}}
\newcommand{\filt}{\sbullet[.5]}

\newcommand{\cat}[1]{\ensuremath{\mathsf{#1}}}

\newcommand{\cVect}{\cat{Vec}}
\newcommand{\cTop}{\cat{Top}}
\newcommand{\cSimp}{\cat{Simp}}

\providecommand\given{}
\newcommand\SetSymbol[1][]{%
  \nonscript\:#1\vert
  \allowbreak
  \nonscript\:
  \mathopen{}}
\DeclarePairedDelimiterX\Set[1]\{\}{%
  \renewcommand\given{\SetSymbol[]}
  #1
}

\newcommand\restr[2]{{%
  \left.\kern-\nulldelimiterspace %
  #1 %
  \vphantom{\big|} %
  \right|_{#2} %
  }}

\newcommand\restrFilt[2]{{%
  \left.\kern-\nulldelimiterspace %
  #1 %
  \vphantom{\big|} %
  \right|_{#2} %
  }}

\makeatletter
\newcommand*\rel@kern[1]{\kern#1\dimexpr\macc@kerna}
\newcommand*\widebar[1]{%
  \begingroup
  \def\mathaccent##1##2{%
    \rel@kern{0.8}%
    \overline{\rel@kern{-0.8}\macc@nucleus\rel@kern{0.2}}%
    \rel@kern{-0.2}%
  }%
  \macc@depth\@ne
  \let\math@bgroup\@empty \let\math@egroup\macc@set@skewchar
  \mathsurround\z@ \frozen@everymath{\mathgroup\macc@group\relax}%
  \macc@set@skewchar\relax
  \let\mathaccentV\macc@nested@a
  \macc@nested@a\relax111{#1}%
  \endgroup
}
\makeatother

\newcommand{\ignore}[1]{}

\newcommand{\tinymatrix}[1]{\begingroup\setlength\arraycolsep{2pt}\footnotesize
  \begin{pmatrix} #1\end{pmatrix}\endgroup}

\arxivtrue{}

\newtheorem{theorem}{Theorem}[section]
\newtheorem{lemma}[theorem]{Lemma}

\newtheorem{corollary}[theorem]{Corollary}
\newtheorem*{corollary*}{Corollary}
\theoremstyle{definition}
\newtheorem{definition}[theorem]{Definition}

\theoremstyle{remark}
\newtheorem{remark}[theorem]{Remark}

\makeatletter
\renewcommand\subparagraph{\@startsection{subparagraph}{5}%
  \z@{.5\linespacing\@plus.7\linespacing}{-.5em}%
  {\normalfont\bfseries}}
\makeatother

\title[Probabilistic Analysis of Multiparameter Persistence Decompositions]{Probabilistic Analysis of Multiparameter Persistence Decompositions into Intervals}
\author[Á.J. Alonso]{Ángel Javier Alonso}
\author[M. Kerber]{Michael Kerber}
\address{Institute of Geometry, Graz University of Technology, Austria}
\email{alonsohernandez@tugraz.at}
\email{kerber@tugraz.at}
\author[P. Skraba]{Primoz Skraba}
\address{Queen Mary University of London, United Kingdom}
\email{p.skraba@qmul.ac.uk}

\date{}

\begin{document}

\begin{abstract}
  Multiparameter persistence modules can be uniquely decomposed into
  indecomposable summands. Among these indecomposables, intervals stand out for
  their simplicity, making them preferable for their ease of interpretation
  in practical applications and their computational efficiency. Empirical
  observations indicate that modules that decompose into only intervals are
  rare. To support this observation, we show that for numerous common
  multiparameter constructions, such as density- or
  degree-Rips bifiltrations, and across a general category of point samples, the
  probability of the homology-induced persistence module decomposing into intervals
  goes to zero as the sample size goes to infinity.
\end{abstract}

\maketitle

\section{Introduction}
\subparagraph{Motivation.}
Persistence modules capture the topological evolution of data across
a range of parameters and are a major object of study
in topological data analysis (TDA).
To understand the structure of persistence modules, they are often split
into a direct sum of indecomposable elements. By the Krull-Remak-Schmidt theorem, such a decomposition is
unique up to isomorphism.
If the parameter space is one-dimensional, that is, the persistence module
is filtered over the real line, every indecomposable has the structure
of an interval, meaning that it represents a certain topological feature
in the data that is active in the range of scales given
by the interval boundaries. The collection of these intervals forms
the famous barcode of persistent homology.

In this work, we study persistence modules
over two real parameters.
The concept of intervals generalizes into this setting, but it is not
true that every 2-parameter persistence module decomposes into intervals.
In fact, the 2-parameter case is already of \emph{wild representation type},
meaning that indecomposables can become arbitrarily complicated.
Those modules that do admit a decomposition into intervals
are called \emph{interval decomposable}.

Restricting attention to interval-decomposable modules seems attractive:
an interval in the decomposition can be interpreted,
as in the one-parameter case, as a topological feature that persists
over a range of scales, providing a simple interpretation of these summands.
Moreover, this subclass of modules allows for faster algorithms for certain
problems, such as the computation of the bottleneck distance~\cite{dx-computing}.

The advantages of interval-decomposable modules raise the question of how
commonly we encounter this case in applications.
Unfortunately, experiments suggests that, typically,
persistence modules seem to contain non-interval summands;~see~\cite{ak-decomposition,hnox-refinement} for recent case studies. This leads to the motivating question for this paper:
can we capture this empirical evidence through a probabilistic mathematical statement?

\subparagraph{Contribution}
We show that many commonly used 2-parameter persistence modules
are unlikely to be interval decomposable when constructed over a large point
sample. More precisely, let $\process_n$ denote a Poisson point sample
in the $d$-dimensional unit cube
with an expected number of $n$ points
and fix $p\leq d-1$. Let $F(\process_n)$ denote a bifiltration constructed over $\process_n$,
and consider $H_p(F(\process_n))$, the induced persistence module in $p$-homology.
Then, we prove that, for common choices of the bifiltration $F$, the probability that $H_p(F(\process_n))$ is interval-decomposable
approaches zero as $n\rightarrow\infty$. In particular, we take $F$ to be the
sublevel offset (or \v{C}ech or Vietoris-Rips) bifiltration, with random
function values, or given by a kernel density estimate or a fixed function,
the degree offset (or \v{C}ech or Vietoris-Rips) bifiltration, or the multicover
bifiltration (with $p = 0$); see the text for the detailed statements.

The proof consists of two major parts: we first show that
for any fixed point set $S$ with a constant number of points in the unit cube,
the Poisson point sample $\process_n$ contains an approximate scaled copy
of $S$ in some subcube with probability going to $1$ as $n\rightarrow\infty$.
This result follows from basic properties of Poisson point processes
and are straightforward for those familiar with stochastic geometry.
However, we believe that our exposition is of benefit for the TDA community.

The second part is to identify point patterns that lead to non-intervals
in the induced persistence module.
The definition of this pattern depends on the bifiltration $F$ chosen, so a separate proof
is required for all aforementioned choices of $F$.
Importantly, these patterns have to be stable under slight perturbation of the point coordinates
to use the probabilistic result from the first part.
We aim for small point patterns
and achieve the provably minimal cardinality of points in many examples. Small point patterns
are more likely to be realized for smaller samples, so our examples facilitate a
quantified analysis of interval-decomposability for $\process_n$ with concrete values of $n$
(which is not subject of this paper). On a technical level, we show the presence of non-intervals
by restricting attention to a finite subposet of $\R^2$ and show that an indecomposable
non-interval summand is present. This technique together with the small size
of the point patters leads to short and pictorial proofs of our main theorems.

\subparagraph{Related work.}
Perhaps closest to our result is the paper by Buchet and Escolar~\cite{be-realizations}.
They provide a simple construction of a simplicial complex
that yields indecomposables of arbitrary large degree, and the construction
is stable under small perturbations. This is partially stronger than what
we provide, as our construction only guarantees indecomposables of degree $2$.
However, their complexes are only shown to be realized via a dynamic
point process in Euclidean space and does not directly apply to
the bifiltrations considered in our work.

Also closely related is the work by Alonso and Kerber~\cite{ak-decomposition}.
Adapted to our notation, they show that
for $F$ the sublevel offset bifiltration and $p=0$,
at least a quarter of the indecomposables of $H_p(F(X_n))$ will be intervals
in expectation when $n$ goes to infinity,
with $X_n$ being a point process with rather weak assumptions.
While the lower bound of $1/4$ might not be tight (as suggested by
experimental evidence in their paper), our result complements their result,
saying that we cannot expect all indecomposables to be intervals.

Recently, Hiraoka et al.~\cite{hnox-refinement} do a case study
on the indecomposables of a commutative ladder, a special case of bifiltrations.
Their experiments confirm the observation that while intervals are frequent,
non-intervals typically appear in the decomposition (Figure~17 in~\cite{hnox-refinement}).

In the purely algebraic setting, Bauer and Scoccola~\cite{bauer2023generic} have
recently shown that being nearly indecomposable (under the interleaving
distance) is a generic property of multiparameter persistence modules.

The bifiltrations considered in this work are the standard models for distance- and density-based bifiltrations
in the literature; they are discussed, for instance, in the recent survey by Botnan and Lesnick~\cite{bl-introduction}
and investigated regarding their stability properties by Blumberg and Lesnick~\cite{bl-stability}.
Efficient algorithms to compute such bifiltrations are an active area of research, with recent results
for sublevel offset bifiltrations~\cite{akll-delaunay}, for bifiltrations of clique complexes
(including Vietoris-Rips complexes)~\cite{akp-filtration} and for multicover~\cite{bdk-sparse,cklo-computing}.
The meataxe algorithm~\cite{parker-computer} provides
a method to compute the decomposition of the induced persistence module.
That algorithm works for more generalized setups; a specialized algorithm for persistence modules
has been proposed by Dey and Xin~\cite{dx-generalized}.
An important intermediate step from bifiltrations to their decomposition is the computation
of a minimal presentation, for which efficient algorithms also exist~\cite{fkr-compression,lw-computing,dx-computing}.

The study of homology in random settings is also an active topic of research;
see~\cite{bobrowski2018topology} for a survey of the geometric setting. The
homology of random geometric complexes at a single scale has been extensively
studied, e.g.~\cite{kahle2011random,bobrowski2011distance,
  yogeshwaran2015topology}. Much less is known about the \emph{persistent}
homology of random geometric complexes, but the last few years have seen
significant progress, including limit theorems~\cite{duy2016limit} and the
expected maximal persistence~\cite{bobrowski2017maximally}, with recent
extensions to multiparameter persistence~\cite{botnan2022consistency}. The
technique of exhibiting a point set that appears with positive probability and
has a given property is a standard approach, and is often used to show bounds in
random settings; the work of Kahle~\cite{kahle2011random}, which proves lower
bounds on the radius at which the $k$-dimensional Betti number is non-zero in
Vietoris-Rips complexes, is an example of this.

\ifarxiv{}
\subparagraph{Acknowledgements.} \myAcknowledgements{} \myFunding{}
\else{}
\fi{}

\section{Ball configurations and Poisson processes}\label{sec:probability}
We refer the reader to \cite{last2017lectures} for a reference on the basic facts below.
The \emph{Poisson distribution} $\distribution{Poisson}(\lambda)$ with rate $\lambda$
is a discrete random variable $X$ with
\[
\Prob(X=k)=\frac{\lambda^k}{k!}e^{-\lambda}.
\]
For $n\in\N$, a \emph{Poisson point sample} $\process_{n}$ over a
measurable\footnote{More precisely, the set must be measurable with respect to
  the Lebesgue measure, however in our case the sets are nice so we generally omit the qualifier.} set $X$ is obtained
by sampling a natural number $N$ from $\distribution{Poisson}(n)$ and then sampling
$N$ points $(x_1,\ldots,x_N)$ independently and uniformly at random from $X$.
In this work, we will restrict to the case that $X$ is the $d$-dimensional unit cube $[0,1]^{d}$.

A Poisson point sample has two important two properties: for every set
$A\subset [0,1]^d$,
the random variable $\abs{\process_{n}\cap A}$ is a Poisson distribution
with rate $n\Vol(A)$,
and the sample satisfies \textit{spatial independence}: for disjoint sets $A,B\subset[0,1]^d$,
$\abs{A\cap\process_{n}}$ and $\abs{B\cap\process_{n}}$ are independent
random variables. See Definition 3.1 in \cite{last2017lectures}.

\begin{figure}
\centering
\includegraphics[width=6cm]{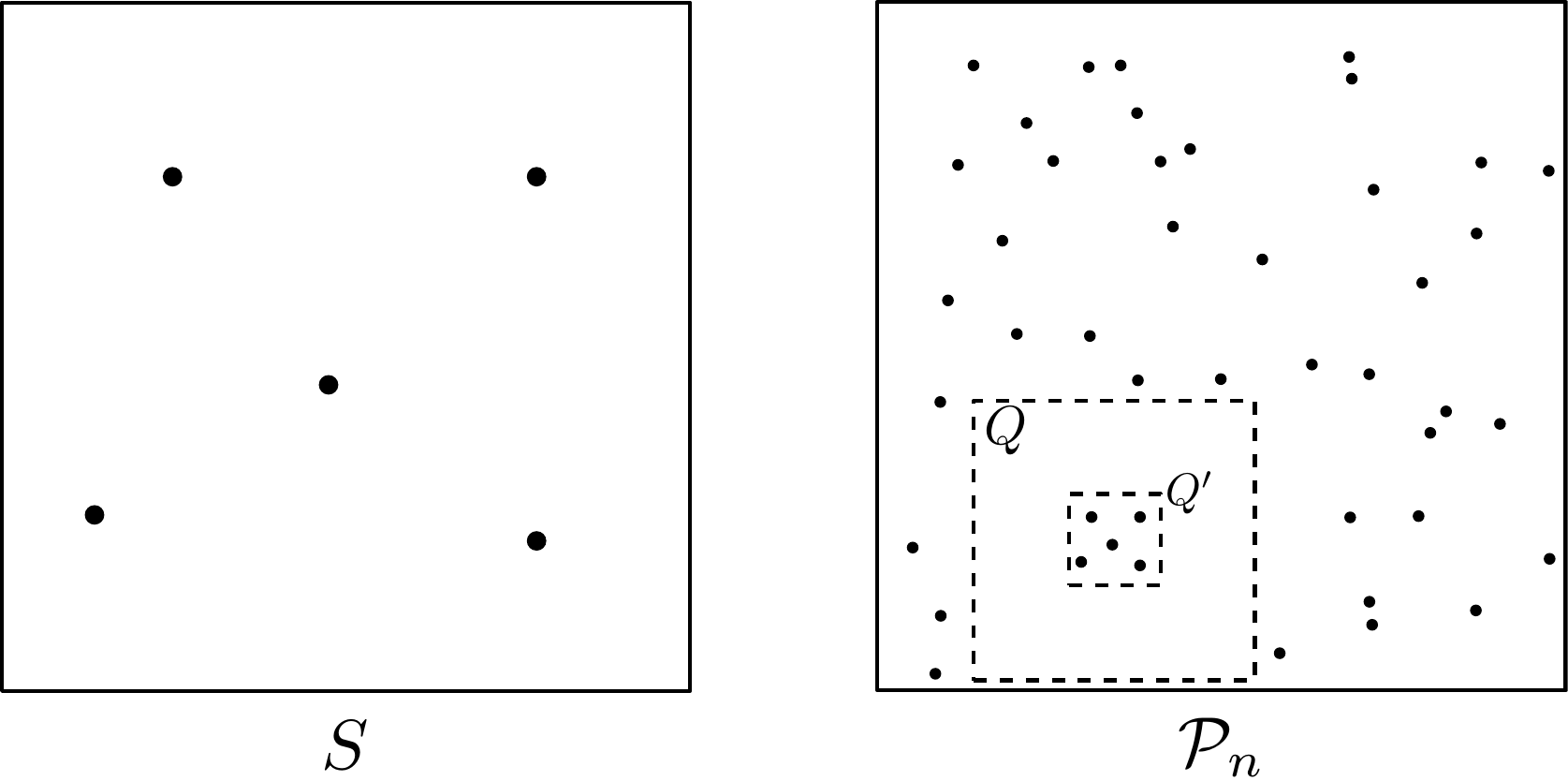}
\caption{Left: A point set $S$ of $5$ points. Right: A Poisson point sample
with a subcube that contains an isolated scaled copy of $S$.}\label{fig:poisson_illu}
\end{figure}

Fix now an arbitrary point set $S$ of constant size in $(0,1)^d$ and $\eps>0$.
We want to show  that for $n$ large enough, it is likely that $\process_{n}$
contains a scaled version of the configuration $S$ up to a perturbation
of every point by at most $\eps$; see Figure~\ref{fig:poisson_illu}
for an illustration.

We define the property of containing a scaled version of $S$ formally. Fix
$S=(a_1,\ldots,a_m) \in [0,1]^d$, and let $B_1,\ldots,B_m$ denote the balls
centered at $a_1,\ldots,a_m$ with radius $\eps$. We assume for simplicity that
$\eps$ is sufficiently small, so that $B_1,\ldots,B_m$ are pairwise disjoint and
every $B_i$ is contained in the unit cube.

Next, fix a realization of the Poisson point process $\process_n$. For a fixed subcube $Q\subset[0,1]^d$,
let $\alpha$ denote the side length of $Q$ and $x$ its center.
Furthermore, let $Q'$ denote the subcube of $Q$ with same center and side length $\frac{\alpha}{4\sqrt{d}}$.

There is a canonical axis-preserving bijection that maps $[0,1]^d$ to $Q'$.
This bijection maps $a_1,\ldots,a_m$ to points in $Q'$, and the $\eps$-balls $B_1,\ldots,B_m$
to pairwise disjoint balls $B_1',\ldots,B_m'$ of radius $\frac{\alpha\eps}{4\sqrt{d}}$.
We say that $\process_n$ \deff{contains a scaled $\eps$-copy of $S$ in $Q$}, if
$\process_n$ contains exactly one point
in $B_i'$, for every $i=1,\ldots,m$, and $\process_n$ contains no further point in $Q$.
We say that $\process_n$ \deff{contains a scaled $\eps$-copy of $S$} if there exists a subcube $Q$ such that
$\process_n$ contains a scaled $\eps$-copy of $S$ in $Q$. We note that the size of the subcube $Q'$ and the condition that $Q$
contains no other points ensures that the points in $Q'$
are ``well-separated'' from the remaining points in $\process_n$. Specifically, the distance between any point in $Q'$ and any point outside $Q$ is at least the diameter of $Q'$. See Figure~\ref{fig:poisson_illu} for an example.

\begin{theorem}\label{thm:poisson}
  For every finite point set $S$ and every $\eps>0$ small enough, there exists a constant $\alpha$ independent of $n$ such that
    \begin{equation*}
    \Prob(\text{$\process_n$ contains a scaled $\eps$-copy of $S$}) \geq  1-e^{-\alpha n}.
   \end{equation*}
 \end{theorem}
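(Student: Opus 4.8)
The plan is to place on the order of $n$ pairwise disjoint subcubes inside $[0,1]^d$, each of side length proportional to $n^{-1/d}$, observe that in any one of them the probability of finding a scaled $\eps$-copy of $S$ is a positive constant independent of $n$, and then use spatial independence to turn this into an exponentially small failure probability.

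Concretely, I would fix a constant $c>0$, put $\beta:=c\,n^{-1/d}$, and place inside $[0,1]^d$ the $T:=\lfloor 1/\beta\rfloor^d$ pairwise disjoint axis-aligned subcubes $Q_1,\dots,Q_T$ of side length $\beta$ arranged in a grid; note $T=\lfloor c^{-1}n^{1/d}\rfloor^d\geq n/(2c)^d$ once $n\geq(2c)^d$. Now fix one such cube $Q=Q_j$. Applying the construction preceding the theorem to $Q$, the inner cube $Q'$ of side length $\beta/(4\sqrt d)$ contains the $m$ scaled $\eps$-balls $B_1',\dots,B_m'$, each of radius $\tfrac{\beta\eps}{4\sqrt d}$, pairwise disjoint since $B_1,\dots,B_m$ are. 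By the two defining properties of a Poisson point sample, $\abs{\process_n\cap B_i'}$ is Poisson with rate $\lambda:=n\,\omega_d\bigl(\tfrac{\beta\eps}{4\sqrt d}\bigr)^d=\omega_d\bigl(\tfrac{c\eps}{4\sqrt d}\bigr)^d$, where $\omega_d$ is the volume of the unit $d$-ball; $\abs{\process_n\cap(Q\setminus\bigcup_i B_i')}$ is Poisson with rate $n\beta^d-m\lambda=c^d-m\lambda\geq 0$; and these $m+1$ counts are independent because their regions are disjoint. Both rates are independent of $n$, so
\[
\Prob\bigl(\process_n\text{ contains a scaled }\eps\text{-copy of }S\text{ in }Q\bigr)=(\lambda e^{-\lambda})^m\,e^{-(c^d-m\lambda)}=\lambda^m e^{-c^d}=:p_0>0,
\]
a constant that does not depend on $n$ or on $j$.

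To conclude, the event $E_j$ that $\process_n$ contains a scaled $\eps$-copy of $S$ in $Q_j$ depends only on $\process_n\cap Q_j$; since the $Q_j$ are pairwise disjoint, spatial independence makes $E_1,\dots,E_T$ mutually independent, each of probability $p_0$. Hence
\[
\Prob\bigl(\process_n\text{ contains a scaled }\eps\text{-copy of }S\bigr)\geq\Prob\Bigl(\,\bigcup_{j=1}^{T}E_j\Bigr)=1-(1-p_0)^{T}\geq 1-e^{-p_0 T}\geq 1-e^{-\alpha n}
\]
with $\alpha:=p_0/(2c)^d>0$, valid for all $n\geq(2c)^d$. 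For the remaining finitely many $n$ the left-hand side is strictly positive, since any fixed finite configuration occurs with positive probability in a Poisson sample over a region of positive measure, so shrinking $\alpha$ if necessary makes the bound hold for all $n$.

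The one genuine decision is the scale $n^{-1/d}$ for the subcubes: it must be small enough that linearly many disjoint subcubes fit in $[0,1]^d$, and large enough that each carries on the order of a constant number of points, so that the per-subcube success probability is bounded away from $0$. After that choice is made, the argument reduces to the elementary Poisson computation above together with the spatial independence recalled before the statement, so I do not expect a real obstacle.
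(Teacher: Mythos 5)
Your proposal is correct and follows essentially the same route as the paper's proof: a grid of $\Theta(n)$ disjoint subcubes of side length $\Theta(n^{-1/d})$, a constant per-cube success probability from the Poisson rates of the scaled balls and their complement, and spatial independence to get the exponential bound. The only differences are cosmetic refinements (you compute the per-cube probability exactly rather than lower-bounding it, and you handle the finitely many small $n$ explicitly).
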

\begin{proof}
  We consider a subcube $Q$ of side length $n^{-1/d}$. %
  The scaled balls $B_1',\ldots,B_m'$ then have radius
  $\frac{\eps n^{-1/d}}{4\sqrt{d}}$ and, hence, volume
  $\frac{\eps^d n^{-1}}{(4\sqrt{d})^{d}} \omega_d$, where $\omega_d$ is the volume of
  the $d$-dimensional unit ball. By the first characteristic property of Poisson
  point samples, the number of points in $\process_n\cap B_i'$ is a Poisson
  distribution with rate $\frac{\eps^d}{(4\sqrt{d})^{d}} \omega_d$, and therefore is
  independent of $n$. Thus, the probability that $B_i'$ contains exactly one
  point of $P_n$ is some constant $\lambda_1$ that is also independent of $n$.

Set $T\coloneqq Q\setminus(B_1'\cup\ldots\cup B_m')$ as the complement of the scaled $\eps$-balls in $Q$.
For $\process_n$ to contain a scaled $\eps$-copy of $S$ in $Q$, $T$ must not contain a point.
The probability for this to occur is lower bounded by the probability that $Q$ does not contain a point.
The number of points in $\process_n\cap Q$ is a Poisson distribution with rate $1$
(since the volume of $Q$ is $1/n$). So, the probability for no points in $\process_n\cap T$ is lower-bounded
by a constant $\lambda_0$ independent of $n$.

$\process_n$ contains a scaled copy of $S$ in $Q$ if and only if each $B_i'$
contains exactly one point of $\process_n$, and $T$ contains no point of $\process_n$.
By spatial independence, we have that these events are independent, and,
therefore, the probability that $\process_n$ contains a scaled copy of $S$ in
$Q$ is at least $\lambda=\lambda_0\cdot(\lambda_1)^{m}$, which is a constant
independent of $n$.

To complete the proof, note that we can pack at least $\lfloor{n^{1/d}\rfloor^d}$ disjoint $d$-dimensional subcubes of side length $n^{-1/d}$ in the unit cube
without overlap, which is larger than $\frac{1}{2}n$ for $n$ large enough. 
In each of these cubes, the probability of not containing a scaled copy of $S$ is at most $1-\lambda$. Again by spatial independence,
the probability that none of the subcubes contain a scaled copy of $S$ is at most $(1-\lambda)^{\frac{1}{2}n}$ which may be upper bounded by $e^{-\alpha n}$ for some constant $\alpha$.
\end{proof}

The above is for \textit{homogenous} Poisson processes, but below we show that the result
holds for non-homogeneous Poisson processes as well. A non-homogeneous Poisson
point process is determined by an (integrable) intensity function $f$. In
practice, $f$ is a probability density function. The number of points in any set
$A\subset \R^d$ is a Poisson random variable with rate $n\cdot\int_A f(x) dx$.
We note that this type of process retains the property that the number of points
(and point configurations) in any countable number of fixed disjoint sets are
independent.
\ifarxiv
The proof, analogous to the one of Theorem~\ref{thm:poisson},
is included in~\cref{sec:proof_non_homogeneous}.
\else
The proof, analogous to the one of Theorem~\ref{thm:poisson},
is included in the full version.
\fi

\begin{corollary}\label{cor:poisson}
Let $\process^f_{n}$ be a non-homogeneous Poisson point process with a continuous intensity $n\cdot f(x)$ for $x\in \R^d$.  If for some $p\in \R^d$ and constant $\delta>0$, there exists a cube $p+[0,\delta]^d$  where
$f$ is strictly positive, then for every finite finite point set $S$ and every $\eps>0$ small enough, there exists a constant $\alpha$ independent of $n$ such that
\begin{equation*}
\Prob(\text{$\process^f_n$ contains a scaled $\eps$-copy of $S$}) \geq 1-e^{-\alpha n}.
\end{equation*}
\end{corollary}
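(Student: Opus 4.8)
The plan is to reduce the non-homogeneous case to a purely local copy of the argument in \cref{thm:poisson}. First I would use continuity and strict positivity of $f$ on the compact cube $C \coloneqq p+[0,\delta]^d$ to extract two constants: since a continuous function on a compact set attains its extrema, there are $c>0$ and $M<\infty$ with $c\le f(x)\le M$ for all $x\in C$, the lower bound being positive by hypothesis. From here on all scaled $\eps$-copies will be placed inside subcubes contained in $C$ (so the notion of a scaled $\eps$-copy is understood relative to subcubes of $C$ rather than of the unit cube), and only the behaviour of $f$ on $C$ will matter.

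Next I would rerun the single-subcube estimate of \cref{thm:poisson} inside a subcube $Q\subseteq C$ of side length $n^{-1/d}$, taking $\eps$ small enough that the balls $B_1,\dots,B_m$ of radius $\eps$ around the points of $S$ are pairwise disjoint and the scaled balls $B_1',\dots,B_m'$ lie in $Q$. As before $B_i'$ has volume of order $1/n$, so $\abs{\process^f_n\cap B_i'}$ is Poisson with rate $\mu_i = n\int_{B_i'}f$, and the two-sided bound $c\le f\le M$ confines each $\mu_i$ to a fixed compact subinterval of $(0,\infty)$ independent of $n$. Since $t\mapsto te^{-t}$ is continuous and strictly positive there, the probability $\mu_i e^{-\mu_i}$ that $B_i'$ contains exactly one point is bounded below by a constant $\lambda_1>0$; and with $T\coloneqq Q\setminus(B_1'\cup\dots\cup B_m')$, the probability that $T$ is empty equals $e^{-n\int_T f}\ge e^{-nM\Vol(Q)}=e^{-M}=:\lambda_0>0$. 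By spatial independence of the non-homogeneous process on disjoint sets (as remarked after \cref{thm:poisson}), the probability that $\process^f_n$ contains a scaled $\eps$-copy of $S$ in $Q$ is at least $\lambda\coloneqq\lambda_0\lambda_1^m>0$.

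Finally I would pack $\lfloor\delta n^{1/d}\rfloor^d\ge \frac{1}{2}\delta^d n$ (for $n$ large) pairwise disjoint subcubes of side $n^{-1/d}$ into $C$ and, again by spatial independence across these subcubes, bound the probability that none of them contains a scaled $\eps$-copy of $S$ by $(1-\lambda)^{\delta^d n/2}\le e^{-\alpha n}$ for a suitable constant $\alpha>0$, which is the claim.

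The only genuinely new step compared to \cref{thm:poisson} is the first one — passing from ``$f$ strictly positive on a cube'' to a uniform two-sided bound — and it is also the step to be careful about: strict positivity by itself does not prevent $f$ from decaying to $0$, which is precisely why the hypothesis cube must be closed (so compact) and why \emph{both} bounds are needed (the lower bound $c$ keeps the $\mu_i$'s away from $0$, the upper bound $M$ keeps the ``empty $T$'' probability away from $0$). Everything downstream is a verbatim repetition of the homogeneous computation with $\Vol(A)$ replaced by $\int_A f$ and then estimated via $c$ and $M$, so I do not expect any further difficulty.
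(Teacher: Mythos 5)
Your proof is correct and follows essentially the same route as the paper's: use continuity of $f$ on the closed cube to get uniform two-sided bounds $0<c\le f\le M$, redo the single-subcube Poisson computation with $\Vol(A)$ replaced by $n\int_A f$ (all rates staying in a compact subinterval of $(0,\infty)$), and then pack $\Theta(n)$ disjoint subcubes of side $n^{-1/d}$ into the cube and apply spatial independence. The only differences from the paper's appendix are notational (the paper scales its subcubes by $(qn)^{-1/d}$ instead of $n^{-1/d}$), so nothing further is needed.
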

\section{Decompositions of persistence modules}
\subparagraph{Persistence modules.}
For a partially ordered set (poset) $P$, a \define{persistence module} $M$ over
$P$ is a functor from $P$ to $\cVect$, the category of (for us,
finite-dimensional) vector spaces over a fixed field $K$. This means that a persistence module assigns to each
$p\in P$ a vector space $M_p$ and to any two $p\leq q$ a linear map
$M_{p\to q}:M_p\to M_q$, such that $M_{p\to s} = M_{q\to s}\circ M_{p\to q}$ for
all $p\leq q\leq s$, and $M_{p\to p}$ is the identity. Morphisms and isomorphisms are defined as usual for functor
categories.

In this work, we only consider product posets of $\Rplus$ and $\Rplus^{\op}$,
where $\Rplus:=[0,\infty)$ and $\Rplus^{\op}$ is the opposite poset, and finite
subposets of these products.

\subparagraph{Decomposition.}
For two persistence modules $M$, $N$ over a common poset $P$, its \emph{direct
  sum} $M\oplus N$ is the persistence module defined by taking direct sums pointwise:
for $p\in P$, we take $M_p\oplus N_p$, and, for every $p\leq q$, the
map $M_p\oplus N_p\to M_q\oplus N_q$,
$(x,y)\mapsto (M_{p\to q}(x),N_{p\to q}(y))$.

A module $M$ is called \emph{indecomposable}
if $M \isomorphic A\oplus B$ implies that $A= 0$ or
$B=0$, where $0$ is the persistence module for which all vector spaces are $0$.
All the persistence modules we consider, which come from finite point sets, can be
decomposed uniquely, up to isomorphism, as a direct sum of indecomposable
summands by the Krull-Remak-Schmidt theorem.

\subparagraph{Restrictions.}
Given a subposet $Q\subset P$,
a persistence module $M$ over $P$ induces a persistence module over $Q$ in the natural
way: taking the vector spaces and linear maps of $M$ at the places of $Q$.
We call this persistence module $\restrFilt{M}{Q}$ over $Q$ the \deff{restriction} of $M$.
Moreover, if $M\isomorphic A\oplus B$ is a decomposition, we have $\restrFilt{M}{Q} \isomorphic \restrFilt{A}{Q}\oplus \restrFilt{B}{Q}$.

A persistence module $M$ over a poset $P$ is \emph{thin} if $\dim M_p\leq 1$ for all $p\in P$.
Clearly, if $M$ is a thin persistence module over $\Rplus^2$, the restricted module $\restrFilt{M}{Q}$
is thin as well for any subposet $Q\subset\Rplus^2$.
A persistence module $M$ is called \emph{interval decomposable}, if it admits a decomposition
into thin modules (this is not the usual definition, see below). We obtain:
\begin{lemma}\label{lem:thin-decomp}
  If a persistence module $M$ over $\Rplus^{2}$ is interval decomposable, then
  for any subposet $Q\subset\Rplus^{2}$ the restriction $\restrFilt{M}{Q}$ is
  interval decomposable.
\end{lemma}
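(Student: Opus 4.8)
The plan is to unwind the definition of interval decomposability adopted above and then use the two facts already recorded in the surrounding text: that restriction is additive with respect to direct sums, and that thinness is inherited by restrictions.

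First I would invoke the definition to write a decomposition $M \isomorphic \bigoplus_{i \in \Lambda} I_i$, where $\Lambda$ is some (possibly infinite) index set and each $I_i$ is a thin persistence module over $\Rplus^{2}$. Next I would observe that restriction to a subposet $Q$ commutes with direct sums: since the direct sum of persistence modules and the restriction are both computed pointwise --- on the vector spaces and on the structure maps --- we get $\restrFilt{M}{Q} \isomorphic \restrFilt{\bigl(\bigoplus_{i\in\Lambda} I_i\bigr)}{Q} \isomorphic \bigoplus_{i\in\Lambda}\restrFilt{I_i}{Q}$. This is precisely the additivity of restriction stated in the paragraph on restrictions (given there for two summands, and extending verbatim to an arbitrary direct sum).

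Finally I would appeal to the remark immediately preceding the lemma: each $I_i$, being thin over $\Rplus^{2}$, restricts to a thin module $\restrFilt{I_i}{Q}$ over $Q$. Hence $\restrFilt{M}{Q} \isomorphic \bigoplus_{i\in\Lambda}\restrFilt{I_i}{Q}$ exhibits $\restrFilt{M}{Q}$ as a direct sum of thin modules, which is exactly the definition of interval decomposability for a module over $Q$, completing the argument.

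There is essentially no genuine obstacle here; the statement is a bookkeeping consequence of the chosen definition of interval decomposability (a decomposition into thin modules rather than into intervals in the classical order-theoretic sense). The only point that deserves a line of care is that direct sums and restrictions both act pointwise, so they commute even when $\Lambda$ is infinite --- although for the modules arising from the finite point sets considered in this paper the decomposition is finite anyway.
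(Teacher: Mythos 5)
Your argument is exactly the one the paper intends: the lemma is stated as an immediate consequence of the two observations in the preceding paragraphs (restriction distributes over direct sums, and thin modules restrict to thin modules), which is precisely how you assemble the proof. Correct and same approach.
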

One usually defines interval decomposability as decomposition into interval
modules, a subcase of thin indecomposables whose support is convex and connected
(e.g., \cite[Def 2.1]{bl-introduction}). However, for all persistence modules we
consider in this paper, the two notions coincide: every thin module can be
decomposed into (finitely many) interval modules~\cite[Thm 24]{abeny-interval}.

\subparagraph{Two non-thin indecomposables.}
As an interface to the examples in the next section, we consider the two modules
depicted in Figure~\ref{fig:finite_posets}. To check that they are indecomposable is
elementary: For the right hand side module $M$, suppose that
$M\isomorphic A\oplus B$ and note that if $A$ is non-zero at the source vertex
of the underlying graph, then it is also not zero at the sinks of the graph, and
thus necessarily $B = 0$. The left example can be similarly checked.

\begin{figure}
\centering
    $\begin{tikzcd}[ampersand replacement=\&,row sep=15, column sep=15pt]
      K   \rar{\tinymatrix{1 \\ 1}} \& K^{2} \rar{\tinymatrix{0 & 1}}    \& K  \\
      \& K     \uar{\tinymatrix{0 \\ 1}}    \&
    \end{tikzcd}$
    \quad\quad
    $\begin{tikzcd}[ampersand replacement=\&,row sep=25pt, column sep=25pt]
      K \& \\
      K^{2} \uar{\tinymatrix{0 & 1}}\rar{\tinymatrix{1 & 0}} \& K \\
      K \uar{\tinymatrix{1 \\ 1}}
    \end{tikzcd}$
\caption{Two non-thin indecomposable persistence modules over finite posets. Both posets are subposets of $\Rplus^2$.}\label{fig:finite_posets}
\end{figure}

We will use the following frequently. It follows immediately
from~\cref{lem:thin-decomp}.

\begin{lemma}\label{thm:non-interval-crit}
  Let $M$ be a persistence module over $\Rplus^2$ and $P$ be a subposet of $\Rplus^2$
  such that $\restrFilt{M}{P} \isomorphic A\oplus B$, where $A$ is one of modules
  in~\cref{fig:finite_posets}. Then, $M$ is not interval decomposable.
\end{lemma}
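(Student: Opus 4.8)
The plan is to argue by contradiction, exploiting the uniqueness part of the Krull-Remak-Schmidt theorem together with \cref{lem:thin-decomp}. Suppose $M$ were interval decomposable. Then \cref{lem:thin-decomp} immediately yields that the restriction $\restrFilt{M}{P}$ is interval decomposable, i.e.\ it admits a decomposition into thin modules.

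Next I would observe that every indecomposable summand appearing when $\restrFilt{M}{P}$ is decomposed into thin modules is itself thin: splitting each thin summand further into indecomposables, every such indecomposable is a direct summand of a thin module, hence has all its vector spaces of dimension at most one. So $\restrFilt{M}{P}$ decomposes into thin \emph{indecomposable} modules. On the other hand, the hypothesis $\restrFilt{M}{P}\isomorphic A\oplus B$, together with the indecomposability of $A$ (verified in the text for both modules of \cref{fig:finite_posets}), exhibits $A$ as an indecomposable summand of $\restrFilt{M}{P}$. By the uniqueness part of Krull-Remak-Schmidt, $A$ must be isomorphic to one of the thin indecomposables in the first decomposition, hence $A$ is thin — contradicting the fact that the modules in \cref{fig:finite_posets} have a two-dimensional space at one of their vertices.

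Since there is no genuine difficulty here, the only point needing care is making the two notions of ``interval decomposable'' line up and confirming that Krull-Remak-Schmidt applies: $\restrFilt{M}{P}$ is a persistence module over a finite poset valued in finite-dimensional vector spaces, so it has an essentially unique decomposition into indecomposables, and — as recalled after \cref{lem:thin-decomp} — a thin module over such a poset always splits into finitely many interval modules, so ``decomposes into thin modules'' and ``decomposes into interval modules'' are equivalent. I expect no obstacle beyond this bookkeeping.
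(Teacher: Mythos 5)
Your proposal is correct and is exactly the argument the paper intends: the paper dismisses this lemma as following ``immediately'' from \cref{lem:thin-decomp}, and your write-up simply fills in the implicit steps (restriction preserves interval decomposability, Krull--Remak--Schmidt uniqueness over the finite poset $P$, and the fact that $A$ is indecomposable but not thin because of its $K^2$ vertex). No gaps.
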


\section{Non-interval decomposability with high probability}

A \define{bifiltration} $F$ is a functor from the poset $\Rplus^{2}$ to the
category $\cTop$ of topological spaces or the category $\cSimp$ of simplicial
complexes. Writing $H_k\colon\cTop\to \cVect$ for the $k$-homology functor
with coefficient in the base field $K$,
$H_k(F):\Rplus^2\to \cVect$ is a persistence module.

We are interested in properties of point sets that are preserved under
perturbation.
We say that a finite point set $S'\subset \R^{d}$ is an
\deff{$\varepsilon$-perturbation} of another point set $S\subset\R^{d}$ if there
exists a bijection $f\colon S'\to S$ such that $\norm{x - f(x)} \leq \varepsilon$.

Below, we go over typical bifiltrations based on point sets. We argue the same
way for each type of bifiltration: we first show that there
exists a point set $S$ whose bifiltration induces a non-interval-decomposable
persistence module, and that this is preserved by $\varepsilon$-perturbation. We
then conclude that the Poisson process $\process_{n}$ of~\cref{sec:probability}
has the same property with high probability precisely because $S$ exists.  Note that with high probability means with probability greater than $1-\frac{1}{n^c}$ with $c>0$.

\subsection{Offset bifiltrations}\label{sec:cech}

The \deff{offset (or union-of-balls) filtration}
$\offset_{\filt}(\pointset)\colon\Rplus\to\cTop$ of a point set $S\subset\R^{d}$, is given by setting
$\offset_r(\pointset)\coloneqq\bigcup_{x\in\pointset} B_r(x)$, where $B_r(x)$ is
the closed ball with center $x$ and radius $r$. The \deff{\v{C}ech filtration}
$\cech_{\filt}(\pointset)\colon\Rplus\to\cSimp$ is the \textit{nerve} of $\offset_{\filt}(\pointset)$,
that is,
$\cech_{r}(\pointset) = \Set{\sigma\subset \pointset\given \bigcap_{x\in\sigma}B_{r}(x)\neq \varnothing}$,
or, equivalently, a simplex $\sigma\subset S$ is in
$\cech_{r}(S)$ if the minimum enclosing ball of $\sigma$ has radius less than or
equal to $r$.
Given a function $\gamma\colon\pointset\to\Rplus$, we define the
\deff{sublevel offset bifiltration} $\offset_{\filt}(\gamma)$ of $S$ with respect
to $\gamma$ by taking
\[
  \offset_{r,s}(\gamma):= \offset_{r}(\gamma^{-1}([0, s])).
\]
And analogously for the \deff{sublevel \v{C}ech bifiltration} $\cech_{\filt}(\gamma)$.
As a consequence of the nerve theorem~\cite{bauerUnifiedViewFunctorial2023a},
$H_{k}(\offset_{\filt}(\gamma))$ and $H_{k}(\cech_{\filt}(\gamma))$ are isomorphic
persistence modules for any $k$.

For the rest of the subsection, we fix $d>1$ to be a fixed constant and $1\leq k\leq d-1$
as the homology dimension; the case $k=0$ will be handled separately in Section~\ref{sec:zero_dim}.

\begin{theorem}\label{thm:non_interval_cech_random}
  Let $\process_n$ be a Poisson point process in $\R^d$. For each point $x$ in $\process_n$, assign
  $\gamma(x)$ uniformly at random in $[0,1]$. Then, the persistence module
  $H_k(\offset_{\filt}(\gamma))$ is not interval decomposable
  with high probability.
\end{theorem}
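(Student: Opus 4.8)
The plan is to apply the general strategy outlined in the paper: exhibit a small point configuration $S\subset\R^d$ together with a choice of function values on it whose sublevel \v{C}ech (equivalently, offset) bifiltration contains one of the two indecomposable modules of \cref{fig:finite_posets} as a summand when restricted to a suitable finite subposet $P\subset\Rplus^2$, verify that this property survives an $\eps$-perturbation of the points \emph{and} a small perturbation of the function values, and then invoke \cref{cor:poisson} (or \cref{thm:poisson}) to conclude that $\process_n$ realizes such a configuration with high probability. The key point is that \cref{thm:poisson} gives us, inside some subcube, an isolated scaled $\eps$-copy of $S$ that is "well-separated" from all other points of $\process_n$; by choosing the relevant radius scale of the restricted subposet $P$ small relative to this separation, the homology of the \v{C}ech complex on the whole sample restricted to $P$ splits as the homology on the copy of $S$ direct sum the contribution of the rest, so $\restrFilt{H_k(\cech_\bullet(\gamma))}{P}$ contains the bad summand. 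Since each point of $\process_n$ receives an i.i.d.\ uniform value in $[0,1]$, we also get, with high probability, that the function values on the copy of $S$ are within any prescribed $\delta$ of the target values (the probability that $m$ i.i.d.\ uniforms each land in a prescribed length-$2\delta$ interval is a positive constant, so this is absorbed into the same Borel–Cantelli / union-bound argument over the $\Theta(n)$ subcubes).

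First I would construct the witness configuration. In the one-parameter offset filtration, the smallest interesting $k$-cycle is the boundary of a $(k+1)$-simplex — $k+2$ points in general position forming an empty $k$-sphere that is born at the circumradius $r_0$ and dies at the filling radius $r_1>r_0$. To force a non-interval in the \emph{bi}filtration, I want this cycle to interact with the sublevel direction so that at one corner of a small square subposet the vector space has dimension $2$ with the three maps to/from neighbors arranged as in \cref{fig:finite_posets}. Concretely: take two such empty-$k$-sphere configurations $S_1, S_2$ that share enough vertices so that in the common radius range there is a $2$-dimensional homology space, and assign the $\gamma$-values so that $S_1$'s defining vertices have low values and $S_2$'s have higher values, staggered so that increasing $s$ first activates one cycle, then the second, while increasing $r$ kills them at different scales. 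Choosing the subposet $P$ to consist of the four grid points straddling the two birth $s$-values and the two death $r$-values, one reads off exactly the right-hand module of \cref{fig:finite_posets} (or its transpose). I would aim for the minimal cardinality — I expect $2(k+2)$ points minus shared vertices, likely around $k+4$ points in dimension $d\ge k+1$ — mirroring the "provably minimal" claim in the introduction.

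The stability under perturbation is routine but must be stated: circumradii and filling radii of finite point sets are continuous (indeed Lipschitz) functions of the point coordinates, and all the strict inequalities $r_0<r_1$, $s$-value separations, etc., are open conditions, so for $\eps$ and $\delta$ small enough every $\eps$-perturbation of $S$ with $\delta$-perturbed function values has a \v{C}ech bifiltration whose restriction to $P$ is isomorphic to the one for $S$ itself; hence it still contains the bad summand, and by \cref{thm:non-interval-crit} the module is not interval decomposable. Finally, the well-separation from \cref{thm:poisson} ensures no simplices of $\cech_r$ for the relevant (small, $n$-independent after rescaling — here one must be slightly careful, since the scaled copy lives in a subcube of side $n^{-1/d}$, so the relevant radii shrink like $n^{-1/d}$, but the \emph{bifiltration structure} restricted to the correspondingly scaled subposet $P_n$ is still isomorphic to the fixed one, and \cref{lem:thin-decomp} only cares about the isomorphism type) radius range mix the copy of $S$ with the rest of $\process_n$, giving the direct-sum splitting.

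The main obstacle I anticipate is the middle step: designing the point pattern and the accompanying $\gamma$-values so that the restriction to a \emph{finite} subposet is \emph{exactly} (up to isomorphism) one of the two modules of \cref{fig:finite_posets}, rather than merely "non-thin somewhere." One has to control not just the dimensions $\dim H_k$ at the four grid points of $P$ but the ranks and the precise linear-algebra of the four structure maps, which requires understanding how cycles in $\cech_r(\gamma^{-1}[0,s])$ map under both increasing $r$ (filling in higher simplices) and increasing $s$ (adding vertices). Getting a clean, pictorial argument — and pinning down the genuinely minimal number of points — is where the real work lies; the probabilistic wrapper is then immediate from \cref{thm:poisson}/\cref{cor:poisson}.
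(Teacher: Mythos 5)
Your overall architecture is the paper's: exhibit a perturbation-stable witness configuration, use \cref{thm:poisson} to find an isolated scaled copy inside $\process_n$ with high probability, use the well-separation to split off its homology as a direct summand over a rescaled subposet, and conclude via \cref{thm:non-interval-crit}. Your treatment of the random $\gamma$-values (demanding each value land in a prescribed $\delta$-window, a constant-probability event per subcube, independent across subcubes) is a valid, slightly heavier alternative to the paper's observation that only the \emph{ordering} matters --- in fact only that one designated vertex $v$ receives the maximal value, which happens with probability $1/|S|$ by exchangeability. The rescaling of the subposet with the $n^{-1/d}$ subcube and the isolation argument are handled correctly.

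The gap is exactly where you locate it, and the one concrete design decision you commit to would fail. You propose restricting to ``the four grid points straddling the two birth $s$-values and the two death $r$-values,'' i.e.\ a $2\times 2$ commutative grid. But every persistence module over the $2\times 2$ grid is interval decomposable (all indecomposables of the commutative square are thin), so no restriction to such a subposet can certify non-interval-decomposability; accordingly, neither module of \cref{fig:finite_posets} is defined over a $2\times 2$ grid. Both of the paper's four-element posets have a $D_4$-shaped Hasse diagram --- a three-element chain plus one element branching off the middle --- and the witness module is precisely the unique non-thin indecomposable of $D_4$, with dimension $2$ at the branch point. The geometric realization in \cref{lem:existence_cech_non_interval} matches this shape: the three comparable radii $r_{\delta'}<r<R_\delta$ at the fixed level $\gamma(v)$ record ``one big sphere ($\sigma$ absent) $\to$ two spheres sharing the facet $\sigma$ $\to$ one sphere filled in,'' i.e.\ $K\to K^2\to K$, while the single lower level $(r,s)$ with $v$ deleted injects the small sphere's class as $K\to K^2$. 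Your instinct of ``two empty spheres sharing vertices with staggered values'' is the right one (the paper realizes it with $k+3$ points, two perturbed regular $(k+1)$-simplices glued along a facet, with $v$ the unique $\gamma$-maximum), but without fixing a subposet containing a $3$-chain and verifying the four structure maps, the proof cannot be completed --- and with the $2\times 2$ subposet it provably cannot be.
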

We follow the strategy hinted at the start of the section. The first step is the following:

\begin{lemma}\label{lem:existence_cech_non_interval}
  There is a finite point set $\pointset\subset\R^{d}$ of $d+2$ points and a
  function $\gamma\colon \pointset\to\R$ such that
  $H_{d-1}(\offset_{\filt}(\gamma))$ (or, equivalently,
  $H_{d-1}(\cech_{\filt}(\gamma))$) is not interval decomposable.

  Moreover, the same holds for any $\varepsilon$-perturbation $S'\subset\R^{d}$
  of $S$, for a small enough $\varepsilon > 0$.
\end{lemma}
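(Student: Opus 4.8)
The plan is to construct $S$ explicitly by placing $d+2$ points so that the sublevel offset bifiltration, restricted to a well-chosen finite subposet of $\Rplus^2$, realizes one of the two indecomposable modules of \cref{fig:finite_posets}; the conclusion then follows from \cref{thm:non-interval-crit}. The natural ansatz is to take $d+1$ of the points to be the vertices of a (regular) $d$-simplex, so that in $(d-1)$-homology they eventually bound a nontrivial cycle (the boundary sphere $S^{d-1}$ of the simplex appears once the $\cech$ radius is large enough to fill in all proper faces but not the top cell), and to add one extra ``central'' point whose role is to kill that cycle at a larger radius by providing the missing top-dimensional coverage. By assigning the function value $\gamma$ so that the central point enters at a later filtration time $s$ than the simplex vertices (which all get the same small value), one obtains a genuinely two-parameter phenomenon: along the $r$-axis the $(d-1)$-cycle is born and then dies (via the minimum enclosing ball of the full vertex set, or via the central point once it is present), and the interaction between "$r$ large enough to form the sphere" and "$s$ large enough to include the killer point" is what breaks interval-decomposability.

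Concretely, I would identify a $3\times 3$ (or smaller, e.g.\ the ``$N$-shaped'' or ``commutative-ladder'' posets of \cref{fig:finite_posets}) subposet $P\subset\Rplus^2$ with coordinates $r_0<r_1<r_2$ and $s_0<s_1$ chosen from the finitely many critical radii and the two function values, compute $H_{d-1}(\cech_{r_i,s_j}(\gamma))$ at each of the finitely many grid points together with all the structure maps, and check by hand that the resulting $P$-module contains $A$ from \cref{fig:finite_posets} as a direct summand (equivalently, that it is $\isomorphic A\oplus B$). The computation at each vertex is a routine nerve/minimum-enclosing-ball calculation: before the killer point is available the $(d-1)$-homology is $K$ once $r$ is in the window where the boundary sphere is complete but the full $(d+1)$-clique is not yet filled, and $0$ otherwise; after the killer point is available the sphere is filled in earlier. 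The structure maps are then forced, and one sees the characteristic ``$K^2$ in the middle with the two projections'' pattern of the indecomposables in \cref{fig:finite_posets}.

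For the perturbation statement, I would note that all the relevant data are \emph{open conditions}: the critical radii (minimum enclosing ball radii of the various subsets of $S$) depend continuously on the point coordinates, and the strict inequalities that put $r_0,r_1,r_2$ strictly between consecutive critical radii, together with the strict separation $s_0<s_1$ of the two function values (which we may keep fixed, or also perturb slightly), are preserved under a sufficiently small $\varepsilon$-perturbation $S'$ of $S$. Hence for $\varepsilon$ small enough the bifiltration $\cech_\filt(\gamma)$ of $S'$ has the \emph{same} restriction to $P$ (up to isomorphism) as that of $S$, so $\restrFilt{H_{d-1}(\cech_\filt(\gamma))}{P}\isomorphic A\oplus B$ still, and \cref{thm:non-interval-crit} again applies. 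The equivalence $H_{d-1}(\offset_\filt(\gamma))\cong H_{d-1}(\cech_\filt(\gamma))$ is the nerve theorem, already quoted above.

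The main obstacle I anticipate is the bookkeeping in the middle: verifying that the full $(d+1)$-clique on the simplex vertices is \emph{not} yet a simplex at the radius $r_1$ where all its proper faces already are — i.e.\ that the minimum enclosing ball of the full vertex set is strictly larger than that of every facet — and simultaneously that the central point, once present, shrinks the filling radius of the sphere to below $r_1$; choosing the geometry of $S$ (regular simplex plus circumcenter, with a carefully tuned scale) so that these radius inequalities hold with room to spare is the one place where a genuine, if elementary, computation is needed. Everything else — the indecomposability of $A$, the restriction lemma, the transfer to $\process_n$ via \cref{thm:poisson} — is already in place.
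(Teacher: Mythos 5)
Your overall strategy (restrict to a finite subposet, exhibit one of the indecomposables of \cref{fig:finite_posets}, then observe that all defining inequalities are open conditions so the configuration survives a small $\varepsilon$-perturbation) is exactly the paper's, and your perturbation argument and use of the nerve theorem are fine. However, the concrete point configuration you propose does not work, and the gap is not mere bookkeeping. A regular $d$-simplex together with one interior ``killer'' point that enters at a later $\gamma$-value produces a persistence module $H_{d-1}(\offset_{\filt}(\gamma))$ that is \emph{thin}: at every $(r,s)$ there is at most one $(d-1)$-cycle, namely the boundary sphere of the simplex, and the only effect of the extra point is to lower the death radius of that cycle once $s$ is large enough. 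The support is a staircase region, and a thin module is interval decomposable by the paper's definition (and decomposes into genuine intervals by the cited result on thin modules). In particular the ``$K^2$ in the middle'' you predict never appears: in $d=2$, for instance, once the three triangles $cv_iv_j$ on the center $c$ and the outer vertices are present they cone off the outer cycle; the complex has $\chi=4-6+3=1$ and $b_2=0$, hence $b_1=0$, and there is no radius at which $H_1$ has dimension $2$. The interaction you describe (``the cycle dies earlier once the killer point is available'') is precisely the kind of two-parameter behaviour that \emph{is} captured by a single interval.

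The missing idea is that you need two independent $(d-1)$-cycles coexisting at some index, with structure maps that cannot be simultaneously split. The paper achieves this with $d+2$ points by gluing \emph{two} $d$-simplices along a common facet $\sigma$ (a bipyramid) and pushing the two apexes inward by different amounts $\delta>\delta'$, so that along the $r$-axis one first sees the single boundary sphere of the bipyramid ($\sigma$ not yet present, $H_{d-1}\isomorphic K$), then both simplex boundaries ($\sigma$ present, $H_{d-1}\isomorphic K^2$, the earlier class mapping to the sum of the two), then one simplex fills in ($H_{d-1}\isomorphic K$ again). The function $\gamma$ is used not to kill a cycle but to \emph{delete} one apex $v$: at the lower $s$-value only the other simplex's boundary sphere exists, and it injects into just one of the two summands of the $K^2$. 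This realizes the left module of \cref{fig:finite_posets}, which is the non-thin summand your plan needs but your geometry cannot produce.
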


\begin{proof}
  Let $\Sigma$ be a regular $d$-simplex with unit edge length in $\R^{d}$. Fix a
  facet $\sigma$ of $\Sigma$. For $\delta>0$, we denote by $\Sigma_{\delta}$ a
  perturbed version of $\Sigma$ given by moving the vertex opposite to $\sigma$
  perpendicularly towards $\sigma$ by a distance of $\delta$. Define $\Sigma^{-}$ as the
  regular $d$-simplex obtained by reflecting $\Sigma$ along the hyperplane of
  $\sigma$, and define $\Sigma^{-}_{\delta'}$ as its perturbed version in the
  analogous way, using $\delta'<\delta$. See~\cref{fig:regular_simplex}.
  \begin{figure}
    \centering \includegraphics{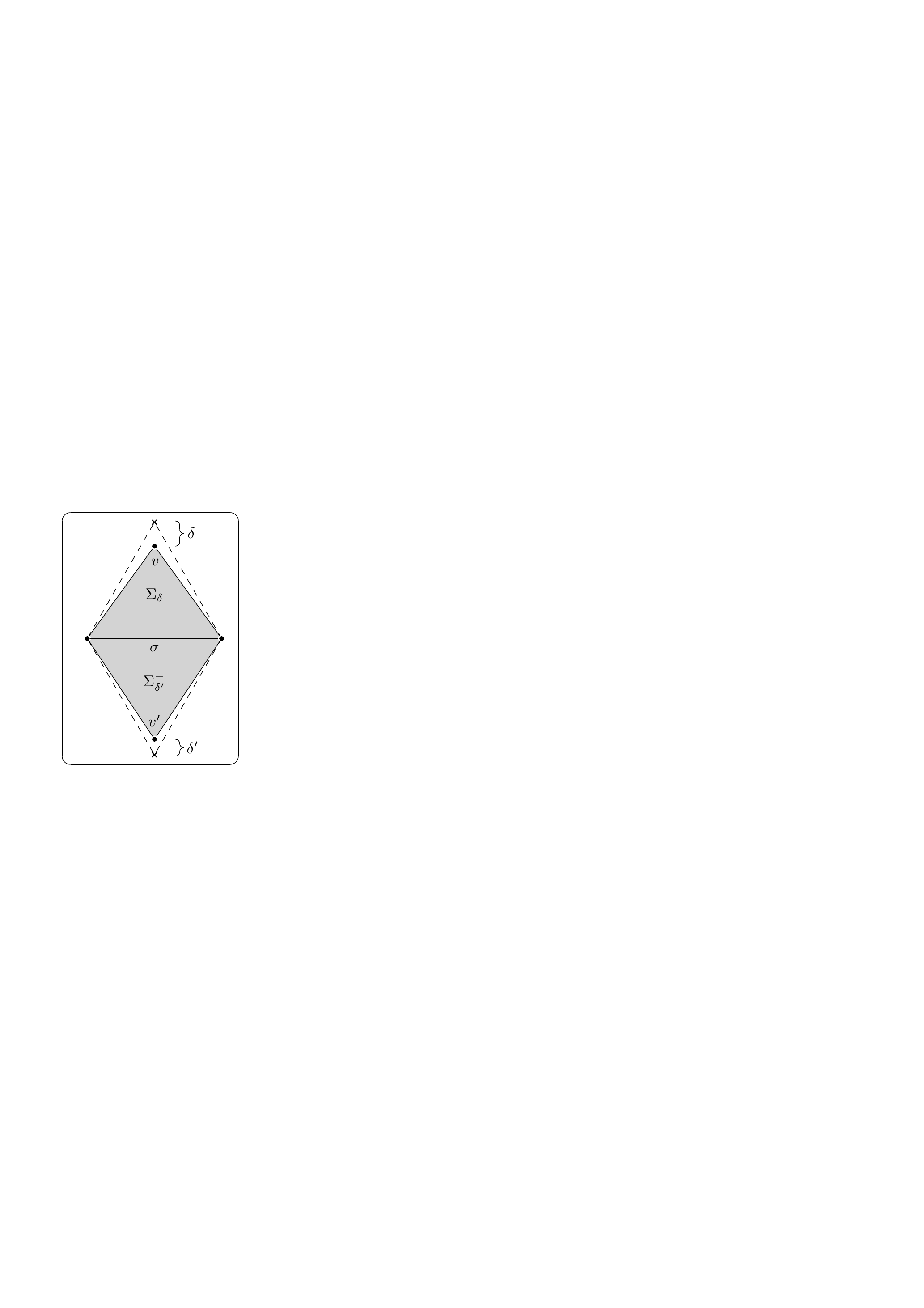}
    \caption{The construction of~\cref{lem:existence_cech_non_interval} for
      $\R^{2}$: two perturbed equilateral triangles glued along
      $\sigma$.}\label{fig:regular_simplex}
  \end{figure}

  We take $S$ to be the vertex set of $\Sigma_{\delta}\cup\Sigma^{-}_{\delta'}$,
  for small enough $\delta' < \delta$ such that the circumcenter of
  $\Sigma_{\delta}$ is still in the interior of $\Sigma_{\delta}$.
  Denote by $v$ the vertex in $\Sigma_{\delta}$ opposite to $\sigma$, and let
  $\gamma\colon S\to\R$ be any function such that $\gamma(v) > \gamma(x)$ for
  any other $x\in S$. We argue that $H_{d-1}(\cech_{\filt}(\gamma))$ is not interval
  decomposable by looking at a restriction of it.
  The minimum enclosing ball (given by its circumscribed hypersphere) of
  $\Sigma$ has radius $R \coloneqq \sqrt{\frac{d}{2(d+1)}}$, and the minimum
  enclosing ball of its facets has radius $r\coloneqq\sqrt{\frac{d-1}{2d}}$
  (see, for instance,~\cite[Theorem 4.5.1]{fiedlerMatricesGraphsGeometry}). Let $r_{\delta}$ be
  the radius of the minimum enclosing ball of the facets that are not $\sigma$
  of $\Sigma_{\delta}$. Let $R_{\delta}$ be the minimum enclosing ball radius of
  $\Sigma_{\delta}$. By the way we construct $S$, we have
  \begin{equation*}
    r_{\delta} < r_{\delta'} < r \text{, and } R_{\delta} < R_{\delta'} < R.
  \end{equation*}
  Finally, let $s$ be the maximum value of $\gamma$ in $S\setminus\Set{v}$.

  We look at the persistence module $H_{d-1}(\cech_{\filt}(\gamma))$ restricted to the
  finite subposet $P\subset\Rplus^{2}$ given by
  $(r_{\delta'}, \gamma(v)) < (r, \gamma(v)) < (R_{\delta}, \gamma(v))$ and
  $(r, s)$, see~\cref{fig:regular_simplex_filtration}. Note that
  $\cech(\gamma)_{(r_{\delta'}, \gamma(v))}$ consists of all $(d-1)$-simplices
  of $\Sigma_{\delta}$ and $\Sigma^{-}_{\delta'}$ except $\sigma$, $\cech(\gamma)_{(r, \gamma(v))}$ consists of all
  $(d-1)$-simplices, while the only $d$-simplex in
  $\cech(\gamma)_{(R_{\delta}, \gamma(v))}$ is $\Sigma_{\delta}$. All in all, the
  restricted persistence module can be seen to be isomorphic to the left example
  of~\cref{fig:finite_posets}, and~\cref{thm:non-interval-crit} yields the
  desired statement.
  \begin{figure}
    \centering \includegraphics{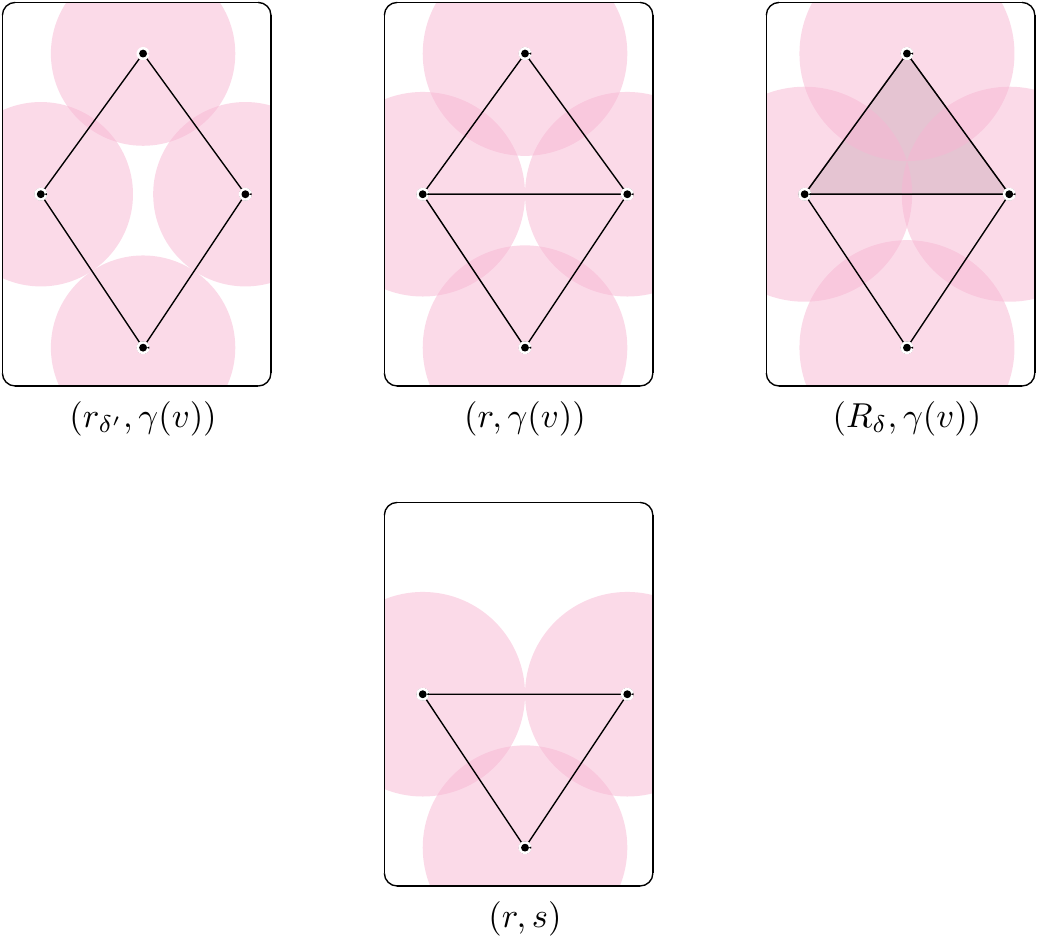}
    \caption{The \v{C}ech and offset bifiltration
      of~\cref{lem:existence_cech_non_interval} restricted to the subposet
      $P\subset\Rplus^{2}$.}\label{fig:regular_simplex_filtration}
  \end{figure}
  Finally, note that there exists a sufficiently small $\varepsilon > 0$ such that
  any $\varepsilon$-perturbation of $S$ induces a non-interval decomposable
  persistence module.
\end{proof}

\begin{remark}
  The example above is minimal: if the $(d-1)$-homology of an offset
  filtration is not a thin persistence module, then the underlying point set has
  at least $d+2$ points.
\end{remark}
    
\begin{proof}[Proof of~\cref{thm:non_interval_cech_random}]
  Given homological degree $k$ and dimension $d$, set $\tilde{d}=k+1\leq d$.  Let $\pointset\subset\R^{\tilde{d}}$ be the construction
  of~\cref{lem:existence_cech_non_interval}. We adopt the notation used in the
  proof of that lemma. We scale down $S$ and embed it in $[0,1]^{d}$. By the triangle inequality, if $S$ is stable under any $\varepsilon$-perturbation in $\R^{\tilde{d}}$, it is stable under any $\frac{\varepsilon}{2}$-perturbation in $\R^d$.  
  \cref{thm:poisson} applied to $\pointset$ yields that $\process_{n}$ contains
  an (isolated) scaled $\frac{\varepsilon}{2}$-copy of $S$ with high probability. The only requirement on the
  function $\gamma$ of~\cref{lem:existence_cech_non_interval} is that it has a
  unique maximum at $v\in S$, as defined in the proof of the lemma. Since $\gamma(x)$
  is sampled uniformly, this occurs with probability
  $\frac{1}{\tilde{d}+2}$ (since $|S|=\tilde{d}+2$) which is a constant independent of $n$. It follows that with high
  probability, $\process_n$ contains a scaled $\frac{\eps}{2}$-copy of $S$ where $v$ has
maximal $\gamma$-value.

  Let $S'$ be the scaled $\frac{\eps}{2}$-copy of $S$ contained in $\process_{n}$.
  Consider the subposet $P\subset\Rplus^{2}$ given in the proof
  of~\cref{lem:existence_cech_non_interval}, and change its values so that the
  construction of the lemma applies to the scaled-down copy $S'$, obtaining $P'\subset\Rplus^{2}$.
  \cref{thm:poisson} guarantees that the distance between a point in $S'$ and a
  point of $\process_{n}$ not in $S'$ is at least the diameter of $S'$. It
  follows that, at every $p\in P'$, the offset of
  $S'$, $\offset_{p}(\restr{\gamma}{S'})$, is an isolated connected component in
  $\offset_{p}(\gamma)$. It follows that
  $H_{\tilde{d}-1}(\offset_{\filt}(\restr{\gamma}{S'}))$, restricted to $P'$, is a
  summand in the decomposition of $H_{\tilde{d}-1}(\offset_{\filt}(\gamma))$,
  restricted to $P'$. Since this summand is not interval decomposable and since
  $\tilde{d} = k + 1$, we conclude that $H_{k}(\offset_{\filt}(\gamma))$ is not
  interval decomposable, as required.
\end{proof}

\subparagraph{Probability density functions and density estimation.} The
function $\gamma$ of the sublevel offset bifiltration $\offset_{\filt}(\gamma)$
is usually given by the application itself or
taken so that $\gamma$ captures the \textit{density} of the points: lower values
of $\gamma$ mean points of lower density, and thus more likely to be
noise~\cite{carlssonTheoryMultidimensionalPersistence2009}. Therefore, we are usually more
interested in regions of higher density. Note that when working with
sublevel sets, as in $\offset_{\filt}(\gamma)$, we would need to invert the order
of the densities---our methods work either way.

If $\gamma$ captures the density, we consider a
Poisson process with intensity  $\gamma$,
$\process_{n}^{\gamma}$, and then we would ideally use $\gamma$ itself in
$\offset_{\filt}(\gamma)$. However, $\gamma$ must usually be estimated. We first treat the case of estimating the density,
in~\cref{thm:process_estimation} below, and then, later in~\cref{thm:process_gamma_fixed}, we treat the more
technical case of a fixed $\gamma$, under generic smooth conditions.

A common approach to density estimation is to use a \emph{kernel}~\cite{silvermanDensityEstimationStatistics1986}. Choosing a kernel
$K_h$, where $h$ is the \emph{bandwidth}, the estimated density at a point $p$ is
\[ \hat{\gamma}(p)= \frac{1}{|S|} \sum_{q\in S} K_h(d(p,q)),\]
where $S$ is the point set.  We note that the kernel must
satisfy certain conditions but, since we do not use them, we refer the reader to any text on statistics e.g. \cite{wasserman2004all}.
The most basic kernel is a \deff{ball kernel}, $K_{h}(x) = 1$ if $x\leq h$; the
estimated density is
\[ \hat{\gamma}(p) = \frac{\abs{S \cap B_h(p)}}{\abs{S}},\] where $B_{h}(p)$ is
the closed ball of radius $h$. This is the kernel we will now work with. We believe
our methods work for more general choices of kernel, but the probabilistic
elements of the proof become much more delicate; we comment on this in the
conclusion.
\begin{theorem}\label{thm:process_estimation}
  Let $\process^\gamma_n \in \R^d$ be a Poisson point process with intensity $\gamma$, where  $\gamma$ is a Morse function. If $\hat{\gamma}$ is the ball estimator with $h=(\log n/n)^{1/d}$, the persistence module
$H_k(\offset_{\filt}(\hat{\gamma}))$ is not interval decomposable with high probability.
\end{theorem}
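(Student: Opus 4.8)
The plan is to reduce to Theorem~\ref{thm:non_interval_cech_random} (or rather, its proof via Lemma~\ref{lem:existence_cech_non_interval}) by showing that the random ball estimator $\hat\gamma$ recovers, on a single well-isolated subcube, a function with a unique maximum at the apex vertex $v$ of the perturbed simplex construction. As before, set $\tilde d = k+1 \le d$ and let $S\subset\R^{\tilde d}$ be the $(\tilde d+2)$-point configuration of Lemma~\ref{lem:existence_cech_non_interval}, scaled down and embedded in $[0,1]^d$; the construction is stable under $\eps$-perturbations for some fixed $\eps>0$. The new feature compared to Theorem~\ref{thm:non_interval_cech_random} is that the values $\gamma(x)$ are no longer an i.i.d.\ uniform draw we can condition on directly, but are determined by the local point count $\abs{\process^\gamma_n \cap B_h(x)}/\abs{\process^\gamma_n}$. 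So the single constant-probability event ``$v$ has the maximal value'' must be replaced by an event controlling these counts.

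First I would invoke Corollary~\ref{cor:poisson} (the non-homogeneous version of Theorem~\ref{thm:poisson}), using that the Morse intensity $\gamma$ is strictly positive on some small cube, to conclude that with high probability $\process^\gamma_n$ contains an isolated scaled $\tfrac{\eps}{2}$-copy $S'$ of $S$ inside a subcube $Q$ of side length $n^{-1/d}$, and that the points of $S'$ are separated from the rest of $\process^\gamma_n$ by at least $\diam(Q')$. Second, and this is the crux, I would show that with high probability the ball-estimator bandwidth $h=(\log n/n)^{1/d}$ is small enough that each ball $B_h(x)$, for $x\in S'$, is contained in $Q$ and hence meets $\process^\gamma_n$ only in points of $S'$ — so $\hat\gamma(x)$ is essentially $\abs{S'\cap B_h(x)}/\abs{\process^\gamma_n}$. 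Here one checks $h = (\log n/n)^{1/d} \le \tfrac12 n^{-1/d}\cdot(\text{const})$ eventually? No — in fact $h/n^{-1/d} = (\log n)^{1/d}\to\infty$, so $B_h(x)$ is much larger than $Q$. This is the real obstacle, and it forces a change of configuration: rather than placing $S'$ in a single tiny subcube, I would place the perturbed-simplex pattern $S'$ at the native scale $\Theta(h)$, so that the balls $B_h(x)$ around the $\tilde d+2$ points of $S'$ have controlled, pairwise-distinct volumes of overlap with the (known) pattern, while still being sparse enough in the ambient process to read off a clean ordering of the $\hat\gamma$-values. Concretely: around $v$ (the apex) one wants $\hat\gamma(v)$ strictly larger than $\hat\gamma$ at the other vertices; by choosing the pattern's geometry at scale $h$ one arranges that $B_h(v)$ contains all of $S'$ while each other $B_h(x)$ misses at least one point of $S'$, and one must then show the $\Theta(h^d)\cdot n = \Theta(\log n)$ ambient points falling in each $B_h(x)$ concentrate (Chernoff bound for Poisson, union bound over the $\tilde d+2$ balls) tightly enough — to within $o(\log n)$ — that the ordering coming from the pattern is not destroyed. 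Since the gaps between the pattern-induced counts are $\Theta(1)$ but the ambient contribution is $\Theta(\log n)$, this cannot work either unless the pattern itself scales so its contribution dominates; so in fact one wants the pattern to carry $\omega(\sqrt{\log n})$ points, i.e.\ a pattern of growing cardinality — contradicting the ``$d+2$ points'' philosophy.

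The honest resolution, which I would pursue, is the following: do not try to make $\hat\gamma$ equal the toy function, but only make $\hat\gamma$ restricted to $S'$ have the single required property (unique max at $v$) with probability bounded below by a constant independent of $n$, then amplify over $\Theta(n)$ disjoint subcubes exactly as in Theorem~\ref{thm:poisson}. For a fixed subcube $Q$ at an appropriate location and scale, condition on the positions $S'$ (given by Corollary~\ref{cor:poisson}); the values $\hat\gamma(x)$ for $x\in S'$ are then continuous functions of the positions of the $\Theta(\log n)$ ambient Poisson points in a bounded neighborhood, plus the fixed pattern contribution. One shows: (i) by the construction of $S$, there is an open set of position-configurations and ambient-point-configurations on which $\hat\gamma(v)$ is strictly largest; (ii) this open set has probability bounded below by a positive constant $c$, uniformly in $n$ — here one uses that conditionally the number of ambient points in the neighborhood is Poisson with rate $\Theta(\log n)$, and that with constant probability this number is, say, exactly $0$, in which case $\hat\gamma(x) = \abs{S'\cap B_h(x)}/\abs{\process^\gamma_n}$ is purely the pattern contribution and $v$ wins by the geometry of the pattern. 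Requiring ``$0$ ambient points in a ball of radius $h$'' has probability $e^{-\Theta(\log n)} = n^{-\Theta(1)}$, which is \emph{not} a constant — so once more this route needs care. The clean fix: work at the native bandwidth scale, shrink the pattern to fit well inside $B_h$ of its own apex while the other balls each omit a vertex, and make the \emph{geometric gap} in pattern-point-counts equal to $\Theta(h^d n) = \Theta(\log n)$ by spreading the pattern across a region of diameter $\Theta(h)$; then Poisson concentration of the $\Theta(\log n)$-rate counts (fluctuations $O(\sqrt{\log n}) = o(\log n)$) preserves the ordering with probability $1-n^{-\Theta(1)}$, a single subcube suffices, and no amplification is needed. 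I expect the writeup to (a) fix such a pattern and verify its $(d-1)$-homology restriction is the left module of Figure~\ref{fig:finite_posets} — this is the same computation as Lemma~\ref{lem:existence_cech_non_interval}, now with the \v Cech/offset radii and the $\hat\gamma$-sublevels both read off from the scaled pattern; (b) carry out the Poisson concentration bound; (c) conclude via Lemma~\ref{thm:non-interval-crit} and the isolation guarantee of Corollary~\ref{cor:poisson}, lifting from $\tilde d$ to $d$ by the triangle inequality as in the proof of Theorem~\ref{thm:non_interval_cech_random}. The main obstacle is (b): reconciling the $\omega(n^{-1/d})$ bandwidth with the need for an \emph{isolated} pattern, which is what dictates the $h=(\log n/n)^{1/d}$ choice in the hypothesis — large enough that count fluctuations are relatively negligible, small enough that $B_h$ of the pattern stays within a region containing $\Theta(\log n)$ points and disjoint from the bulk.
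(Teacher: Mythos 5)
You correctly isolate the crux --- the bandwidth $h=(\log n/n)^{1/d}$ is much larger than the side length $n^{-1/d}$ of the subcubes used in \cref{thm:poisson}, so the balls $B_h(x)$ reach far outside the subcube containing the pattern and the estimator values are contaminated by $\Theta(\log n)$ ambient points --- and your ``honest resolution'' (get the event ``$\hat\gamma$ has its unique maximum on $S'$ at $v$'' with probability bounded below by a constant, then amplify over many subcubes) is exactly the paper's strategy. But you then abandon that route for the wrong reason and replace it with one that does not work. The step you are missing is how to get the constant lower bound without demanding zero ambient points: for two pattern points $p,q$ the difference $\hat\gamma(p)-\hat\gamma(q)$ is determined by the counts in the two \emph{disjoint} regions $A_p=B_h(p)\setminus B_h(q)$ and $A_q=B_h(q)\setminus B_h(p)$, which are independent Poisson variables with comparable rates $\Theta(\log n)$; by (near-)symmetry each of the two orderings has probability bounded below by a constant independent of $n$, and hence so does the full ordering ``$v$ maximal''. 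This is the paper's single-subcube argument. The second ingredient you never reach is the amplification fix: to make the per-subcube events independent one packs only $\Theta(n/\log n)$ subcubes whose $h$-neighborhoods (estimator supports) are pairwise disjoint, which is still enough for $(1-c)^{\Theta(n/\log n)}\to 0$.

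Your final ``clean fix'' is genuinely flawed. With a ball kernel of fixed radius $h$, the balls $B_h(x)$ for the $\tilde d+2$ pattern points all have the same volume, so their expected ambient counts agree up to the variation of the intensity over a distance $\Theta(h)$, which is $O(h\cdot n h^d)=o(\sqrt{\log n})$, while the pattern itself contributes gaps of only $O(1)$. The fluctuations of the counts are $\Theta(\sqrt{\log n})$, so there is no concentration of the \emph{ordering}: $\Prob(\hat\gamma(p)>\hat\gamma(q))$ stays near $1/2$ rather than tending to $1$. Consequently a single subcube does \emph{not} suffice, and ``spreading the pattern across a region of diameter $\Theta(h)$'' cannot create a $\Theta(\log n)$ gap in expected counts with an $O(1)$-point pattern. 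The randomness of the ordering is not an obstacle to be beaten by concentration; it is precisely what guarantees (with constant probability per subcube) that the required ordering occurs somewhere among the $\Theta(n/\log n)$ independent trials.
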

\begin{proof}
  The proof is similar to Theorem \ref{thm:non_interval_cech_random}. The main
  difference is we require the kernel estimates to be independent for each
  subcube $Q_i$. That is for any $p\in Q_i$ and $q\in Q_j$, the distance between
  $p$ and $q$ must be at least twice the bandwidth. Therefore in the proof of
  Corollary \ref{cor:poisson} (resp. the proof of Theorem 1), we can only pack
  $(\lfloor n/4^d\log n)^{1/d}\rfloor)^d = O(n/\log n)$ subcubes into the unit cube
  such that the estimator supports are disjoint. %

  It remains to analyze the case for one subcube. We again fix a point set $S$ and $\eps$ which leads to a non-interval decomposition. While the estimates for each sample are dependent, we show that the probability of any permutation of ordering is strictly positive. For any two points in $s_1,s_2\in S$, consider any points $p\in B_\eps(s_1)$ and $q\in B_\eps(s_2)$. Define the sets $A_p = B_h(p) - B_h(q)\cap B_h(p)$ and $A_q = B_h(q) - B_h(p)\cap B_h(q)$. By construction, $A_p$ and $A_q$ of not intersect each other, nor any of the subcubes we consider (see Figure~\ref{fig:estimator_setup}). As $A_p$ and $A_q$ are disjoint,  the number of points in in $A_p$ and $A_q$ are independent Poisson random variables.  Further, as they have volume  $\Omega(1/n)$. Hence, $\Prob(|A_p\cap\process_n| >|A_q\cap\process_n|)$  and $\Prob(|A_p\cap\process_n| <|A_q\cap\process_n|)$  are both strictly positive, which implies that any ordering occurs with positive probability.
\begin{figure}
  \centering \includegraphics{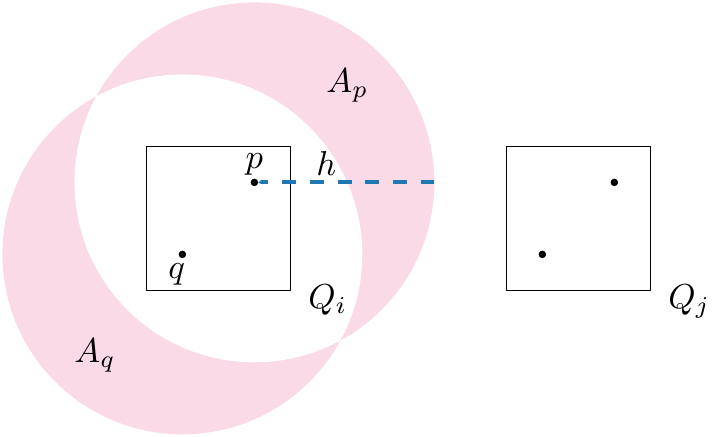}
  \caption{The setup of the proof of~\cref{thm:process_estimation}.}\label{fig:estimator_setup}
\end{figure}
\end{proof}

\begin{remark}
  The argument above requires that $h\rightarrow 0$, as for a
  fixed $h$ the ordering of the points in each $Q_i$ are no longer independent,
  requiring a more delicate analysis. A similar problem occurs for kernels with non-compact support.  %
\end{remark}

We now tackle the case that we construct $\offset_{\filt}(\varphi)$ for
some fixed function $\varphi$, under generic smoothness conditions:

\begin{theorem}\label{thm:process_gamma_fixed}
  Let $\process_n$ be a Poisson point process in $\R^d$ and $\varphi$ a real-valued ($C^2$)Morse function on $\R^d$ with finitely many critical points. The persistence module
$H_k(\offset_{\filt}(\varphi))$ is not interval decomposable
with high probability.
\end{theorem}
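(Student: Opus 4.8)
The plan is to reduce this to the same mechanism as Theorem~\ref{thm:non_interval_cech_random}: exhibit a small, perturbation-stable point pattern $S$ whose sublevel \v{C}ech bifiltration (with respect to the \emph{fixed} function $\varphi$ restricted to $S$) induces a non-interval-decomposable module, and then invoke Theorem~\ref{thm:poisson} to find a scaled isolated copy of $S$ in $\process_n$ with high probability. The new difficulty compared to the random-$\gamma$ case is that we can no longer choose $\gamma$ freely: when $\process_n$ drops a scaled $\eps$-copy of $S$ into a tiny subcube $Q$, the values of $\varphi$ on those points are essentially $\varphi$ evaluated at nearby points, so to leading order $\varphi$ is \emph{constant} on the copy. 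Thus the one-parameter "density" direction collapses, and a generic point configuration in a small cube will have $H_{k}$ of the sublevel offset bifiltration that is (essentially) supported on a single horizontal line — forcing it to look thin. I therefore cannot use the construction of Lemma~\ref{lem:existence_cech_non_interval} verbatim.

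The key idea to fix this is to use the first-order behavior of $\varphi$. Since $\varphi$ is $C^2$ with finitely many critical points, pick a point $p_0$ that is \emph{not} critical; near $p_0$, $\varphi$ is, up to a small controlled error, an affine function $\varphi(x)\approx \varphi(p_0) + \langle \nabla\varphi(p_0), x-p_0\rangle$ with $\nabla\varphi(p_0)\neq 0$. So inside a small subcube around $p_0$, $\varphi$ restricted to the points behaves like a linear functional with a fixed nonzero gradient direction. I would then engineer the point pattern $S$ so that the \emph{ordering} of the points induced by this linear functional is exactly the ordering needed in Lemma~\ref{lem:existence_cech_non_interval} — in particular so that the special vertex $v$ (opposite the glued facet) sits strictly furthest along the gradient direction, hence has strictly maximal $\varphi$-value, with a gap robust to the $O(\mathrm{diam}(Q)^2)$ Taylor error and to $\eps$-perturbation. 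Concretely: build the regular-$(k+1)$-simplex-glued-to-its-reflection configuration of Lemma~\ref{lem:existence_cech_non_interval}, then apply a small rigid rotation so that $v$ is the unique maximizer of $\langle\nabla\varphi(p_0),\cdot\rangle$ over $S$ with a definite margin; scale it down enough that the Taylor remainder of $\varphi$ over the scaled copy is negligible compared to that margin. Because the circumradius/minimum-enclosing-ball inequalities $r_\delta<r_{\delta'}<r$, $R_\delta<R_{\delta'}<R$ used in the lemma depend only on the (scaled) geometry and not on $\varphi$, and because "$v$ has strictly maximal $\varphi$-value'' is all that was required of $\gamma$ there, the restricted module over the corresponding finite subposet $P'\subset\Rplus^2$ is again isomorphic to the left module of Figure~\ref{fig:finite_posets}, and Lemma~\ref{thm:non-interval-crit} applies.

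With the pattern in hand the probabilistic part is routine and identical in structure to the proof of Theorem~\ref{thm:non_interval_cech_random}: by Theorem~\ref{thm:poisson}, with probability at least $1-e^{-\alpha n}$ the process $\process_n$ contains an isolated scaled $\eps'$-copy $S'$ of $S$ (for $\eps'$ small enough to absorb both the perturbation stability of the configuration and the Taylor-error margin), placed in a subcube $Q$ of side $n^{-1/d}$ near enough to $p_0$ — here one packs the $\Theta(n)$ disjoint subcubes so that at least one lies in the neighborhood of $p_0$ where the affine approximation with the margin bound is valid (a fixed-probability event per cube, or simply deterministically guaranteed by restricting the packing region). The isolation from Theorem~\ref{thm:poisson} ensures that $\offset_p(\restr{\varphi}{S'})$ is a connected component of $\offset_p(\varphi)$ at every $p\in P'$, so $H_{k}(\offset_{\filt}(\restr{\varphi}{S'}))\restriction_{P'}$ is a direct summand of $H_{k}(\offset_{\filt}(\varphi))\restriction_{P'}$, and since that summand is non-interval, Lemma~\ref{lem:thin-decomp} forces $H_{k}(\offset_{\filt}(\varphi))$ to be non-interval decomposable. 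The main obstacle — and the only genuinely new content relative to the earlier theorems — is exactly the quantitative bookkeeping in the previous paragraph: choosing the scale of $S$, the rotation aligning $v$ with $\nabla\varphi(p_0)$, and $\eps'$ simultaneously so that the $C^2$ Taylor remainder of $\varphi$ over the scaled copy is strictly dominated by the $\varphi$-value gap at $v$, uniformly over $\eps'$-perturbations; everything else transfers unchanged.
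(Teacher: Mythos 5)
Your proposal is correct and follows essentially the same route as the paper: pick a regular point of $\varphi$, rotate the configuration of Lemma~\ref{lem:existence_cech_non_interval} to align with $\nabla\varphi$, and scale it down until $\varphi$ induces the required ordering (here only that $v$ is the strict maximizer), then apply Theorem~\ref{thm:poisson} with subcubes packed inside a critical-point-free region. The only cosmetic difference is that the paper packages the ``scaling makes $\varphi$ respect a prescribed linear order'' step as a separate lemma (Lemma~\ref{lem:any_function}) proved via cone orderings, whereas you argue it directly from the $C^2$ Taylor remainder being dominated by the linear margin --- both are valid.
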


The main technical tool is a form of linearization. Essentially, we show that
there exists an isometric transformation (with scaling) that the function will
induce the required ordering on the point set $S$. We first define a technical
condition on the point sets:
\begin{definition}
An ordering on a point set  $S \subset \R^d$  is called $f$-linear if there exists a linear function $f:\R^d \rightarrow \R$ which realizes the ordering on $S$, i.e. $\forall x,y \in S,\; x\preceq y \Leftrightarrow f(x) \leq f(y)$. %
\end{definition}
It is straightforward to check that all of the obstructions to being interval indecomposable we present are $f$-linear. We further make the assumption that $f$ induces a total order on the
point set (which is the case in all our constructions). The main technical
result follows:
\begin{lemma} \label{lem:any_function}
  Given a  $f$-linear ordering on a point set $S$ and a real-valued ($C^2$) Morse function $g$, at any regular point of $g$,  for all sufficiently small scalings there exists an isometry of (scaled) $S$ such that $f$ and $g$ induce the same order on $S$.
\end{lemma}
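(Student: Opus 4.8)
The plan is to use a first-order Taylor expansion of $g$ at the regular point and exploit the fact that, after sufficient shrinking, the point set $S$ sees only the linear part of $g$, which is governed by the gradient $\nabla g$. Let me write $p$ for the chosen regular point, so $\nabla g(p)\neq 0$. For a scale $t>0$ and an isometry $\Phi$ of $\R^d$, consider the placed-and-scaled copy $S_{t,\Phi} := p + t\,\Phi(S)$. For a point $x\in S$ we have, writing $u = \Phi(x)$,
\[
  g(p + t u) = g(p) + t\,\langle \nabla g(p), u\rangle + O(t^2 \|u\|^2),
\]
where the $O(t^2)$ term is uniform over the finite set $S$ (using the $C^2$ bound on a compact neighborhood of $p$). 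Dividing by $t$, the function $x \mapsto \tfrac{1}{t}\big(g(p+t\Phi(x)) - g(p)\big)$ converges uniformly on $S$, as $t\to 0$, to the linear function $x\mapsto \langle \nabla g(p), \Phi(x)\rangle$.

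The key step is then to choose the isometry $\Phi$ so that this limiting linear function realizes exactly the $f$-linear order $\preceq$ on $S$. Since the order is $f$-linear, there is a linear functional $f$ with $f(x) < f(y) \iff x \prec y$ for $x,y\in S$; after composing with a rotation we may assume $f(x) = \langle c, x\rangle$ for some nonzero vector $c$. Now pick a rotation $\rho$ (i.e.\ an element of $\mathrm{SO}(d)$, or an orthogonal map) that sends the unit vector $c/\|c\|$ to $\nabla g(p)/\|\nabla g(p)\|$, and set $\Phi = \lambda\rho$ absorbed into the scale — more cleanly, take $\Phi$ to be this rotation and note $\langle \nabla g(p), \Phi(x)\rangle = \|\nabla g(p)\|\,\langle \rho^{-1}\nabla g(p)/\|\nabla g(p)\|, x\rangle$ is a positive multiple of $\langle c/\|c\|, x\rangle = f(x)/\|c\|$. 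Hence the limiting linear order on $S$ under $\Phi$ agrees with $\preceq$.

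Finally, since $f$ induces a \emph{total} order on the finite set $S$, there is a strictly positive gap $\eta := \min\{\,|f(x)-f(y)| : x,y\in S,\ x\neq y\,\}/\|c\| > 0$ between consecutive values of the limiting linear function composed with $\Phi$. Uniform convergence on $S$ then gives a threshold $t_0>0$ such that for all $0 < t < t_0$ the perturbation $O(t)$ is smaller than $\eta/2$, so $\tfrac{1}{t}(g(p+t\Phi(x))-g(p))$ still respects the same strict order; since dividing by $t>0$ and adding the constant $g(p)$ are order-preserving, $g$ itself induces the order $\preceq$ on $S_{t,\Phi} = p + t\Phi(S)$. The map $x\mapsto p + t\Phi(x)$ is precisely an isometry of the scaled copy of $S$, completing the proof. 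The main obstacle is purely bookkeeping: making sure the quantifiers line up — the scaling must be chosen \emph{after} the isometry and \emph{after} the Taylor remainder bound, and one must check that the $f$-linear functional can always be rotated onto the gradient direction (which is where regularity of $p$, i.e.\ $\nabla g(p)\neq 0$, is essential); no genuinely hard estimate is involved.
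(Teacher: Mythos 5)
Your proof is correct, and it takes a genuinely different route from the paper. The paper works via \emph{cone orderings}: it forms the cone spanned by the difference vectors $U=\{p-q : p,q\in S,\ f(p)\geq f(q)\}$, invokes a result of Marshall et al.\ saying that if $\nabla g\cdot u\geq 0$ for all $u\in U$ at every point of $\mathrm{conv}(S)$ then $g$ realizes the cone order (hence the $f$-order), and then uses the bounded curvature of the level set $g^{-1}(g(a))$ together with continuity of $\nabla g$ to guarantee this gradient condition after rotating $\nabla f$ onto $\nabla g(a)$ and scaling down. You instead Taylor-expand $g$ to first order at the regular point, rotate the defining functional of the $f$-order onto the gradient direction, and use the strictly positive gap between the (distinct) $f$-values on the finite set $S$ to absorb the uniform $O(t)$ remainder. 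Your argument is more elementary and self-contained — it needs no external ordering lemma and no positive-reach/curvature discussion — while the paper's version yields the slightly stronger conclusion that $g$ is monotone with respect to the cone ordering throughout $\mathrm{conv}(S)$, not merely order-correct at the finitely many sample points. Both arguments rely on the standing assumption, stated just before the lemma, that $f$ induces a total order on $S$; your gap $\eta$ would degenerate without it. Two cosmetic points: a linear functional on $\R^d$ is already of the form $\langle c,\cdot\rangle$, so no preliminary rotation is needed there; and the threshold comparison should be against $\|\nabla g(p)\|\,\eta/2$ rather than $\eta/2$, a harmless constant.
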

\begin{proof}
  We assume the values of $f$ at $S$ are distinct. Let $a$ be a regular point of $g$ and index the points in $S$ such that $f(p_i) < f(p_j)$ if $i<j$. Without loss of generality, we can assume $p_0$ is at $a$ which we can take to be the origin and $f(p_0)=0$.

  We require the notion of a \emph{cone ordering}~\cite{{marshall1967order}}. This is a partial ordering on a convex cone $D$ where for $x,y\in D$,  $x\preccurlyeq y$ if and only if  $y-x \in D$.
  Consider the cone induced by the span of the vectors
  $U=\Set{p-q \given p,q \in S, f(p)\geq f(q)}$. By construction, the cone ordering agrees with the order induced by $f$ on $S$. As the values of $f$ are distinct, this cone is acute, i.e.\ the angle between any two points in the cone is less than $\pi$.

  Consider this cone at $p_0$, i.e.\ the origin. If for all $u\in U$,  $\nabla g(p) \cdot u \geq 0$ for all $p\in \mathrm{conv}(S)$, 
  where $\mathrm{conv}(S)$ denotes the convex hull of $S$. Then
  \cite[Corollary 4]{marshall1967order} states that the order induced by $g$ is equivalent to the ordering induced by the cone ordering and hence $f$.

  As $a$ is a regular a point of $g$, $g^{-1}(g(a))$ is a $(d-1)$-dimensional surface with positive reach and hence bounded curvature~\cite{federer1959curvature}. Placing $p_0$ at $a$, rotate $S$ such that $\nabla f(p_0) = \nabla g(a)$. Sufficiently scaling down $S$, we can ensure that the cone spanned by $U$ lies on one side of the tangent plane of $g^{-1}(g(a))$, so $\nabla g(p_0) \cdot u \geq 0$. As the gradient is continuous for $C^2$ functions, for all sufficiently scaled $S$, $\nabla g(p) \cdot u \geq 0$ for all $p\in \mathrm{conv}(S)$, completing the proof.
\end{proof}
An illustration of the construction in the lemma for the $S$ in Figure~\ref{fig:cross_construction} can be seen in Figure~\ref{fig:general_function}. Note that we tilt it slightly to ensure the function values at the points are unique and the red lines indicate an outer bound for the cone $U$.
\begin{figure}
  \centering \includegraphics[width=\textwidth]{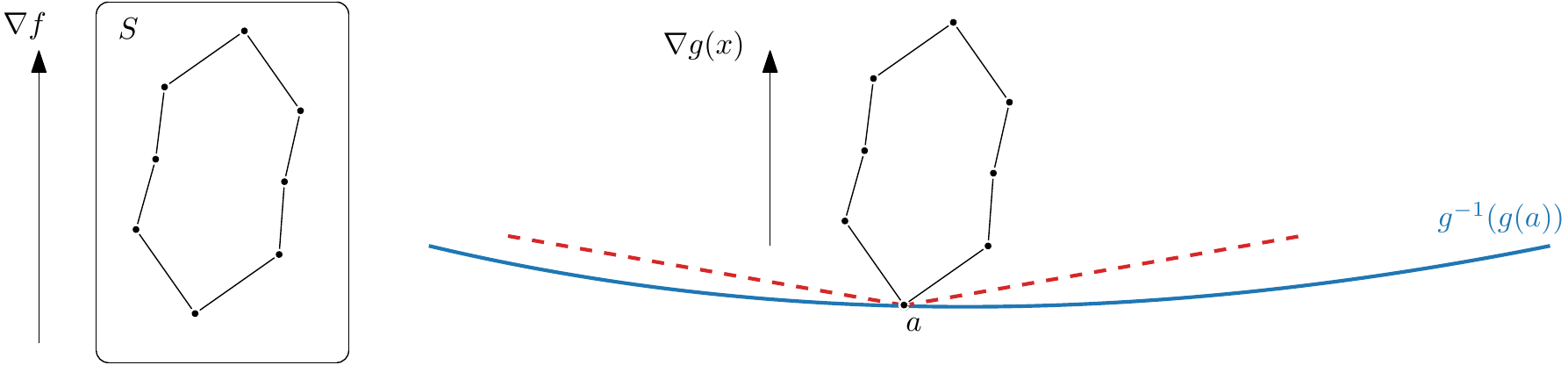}
  \caption{The construction from Lemma \ref{lem:any_function}. We tilt $S$ to ensure the values at the points are unique.  We then place the minimum point at the desired point. If we scale $S$ sufficiently, the cone given by the red lines which contains the set $U$ and will be contained in the superlevel set locally. }\label{fig:general_function}
\end{figure}
We are now ready to finish the proof:
\begin{proof}[Proof of~\cref{thm:process_gamma_fixed}]
Since we assume that $\varphi$ only has a finite number of critical points,  we can place a cube of constant side length such that all points in the cube are regular points of $\varphi$. As it is Morse and $C^2$, this implies that the gradient of $\nabla \varphi(x)$ is non-zero at all points $x$ in the cube. As in Theorem~\ref{thm:poisson}, we can pack $O(n)$ subcubes $Q_i$ of side-length $O(1/n^{1/d})$ within the constant sized cube.
 By Lemma~\ref{lem:any_function}, for each $Q_i$, we can orient $S$ such that the ordering of the vertices induced by $\varphi$ is as in Lemma~\ref{lem:existence_cech_non_interval}. The result follows.
\end{proof}
We note that by~\cref{cor:poisson} this result holds in the case of a non-homogeneous Poisson process, e.g.\ if $\gamma$ is the intensity  of the Poisson process.

\subparagraph{Degree offset bifiltration.} A problem with using density
estimation is that we need to choose a bandwidth parameter. The degree
bifiltrations~\cite{lesnickInteractiveVisualization2D2015} improve on this by
being parameter-free. For a point set $\pointset$ and $r\in\R$, we define the
\deff{degree} of a point $x\in\pointset$ at scale $r$ as the number of points
$y\in\pointset$ with $y\neq x$ such that $\norm{x-y} \leq 2 r$. Intuitively, the degree of a
point estimates its density: higher degree implies higher density. We denote by
$\degree_{r,k}(\pointset)$ the subset of points of $\pointset$ that have at
least degree $k$ at scale $r$. Then, the \deff{degree offset
  bifiltration}
$\degreeoffset_{\filt}(\pointset)\colon\Rplus\times\Rplus^{\op}\to\cTop$ is
given by
$\degreeoffset_{r,k}(\pointset) = \offset_{r}(\degree_{r,k}(\pointset))$.
Similarly for the \deff{degree \v{C}ech bifiltration}
$\degreecech_{\filt}(\pointset)$.

\begin{lemma}\label{lem:existence_degree_cech_non_interval}
  There is a finite point configuration $\pointset$ in $\R^{d}$ of
  $d+3$ points and an
  $\varepsilon > 0$ such that for any
  $\varepsilon$-perturbation $\pointset'$ of $\pointset$,
  $H_{d-1}(\degreeoffset_{\filt}(\pointset'))$ is not interval decomposable.
\end{lemma}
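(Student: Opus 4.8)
I would mirror the proof of \cref{lem:existence_cech_non_interval}, \emph{replacing the unique-maximum condition on the function $\gamma$ by a condition on the degree statistic, which the degree bifiltration supplies for free}. Recall that construction: a regular unit $d$-simplex perturbed to $\Sigma_\delta$ (apex $v$, opposite facet $\sigma$) together with its mirror image across the hyperplane of $\sigma$, perturbed to $\Sigma^{-}_{\delta'}$ (apex $v^-$), with $0<\delta'<\delta$ small; these $d+2$ points have facet-circumradii $r_\delta<r_{\delta'}<r$ and simplex-circumradii $R_\delta<R_{\delta'}<R$, where $r,R$ are the facet- and circumradius of the unperturbed simplex. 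The plan is to add one more point $w$ on the axis through $\sigma$, placed just beyond $v^-$ (at a suitable distance $t<2\rho_1$, for a radius $\rho_1\in(r_{\delta'},r)$ fixed below), so that $\|v^--w\|<2\rho_1$ while $\|x-w\|>2R_{\delta'}$ for every other $x$ in the configuration; a short computation shows this is possible for $\delta,\delta'$ small. Let $\pointset$ be these $d+3$ points.

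The key observation is the degree profile. Choose radii $r_{\delta'}<\rho_1<r<\rho_2<R_\delta<\rho_3<R_{\delta'}$. For $\rho\in[\rho_1,\rho_3]$ all edges of $\Sigma_\delta$ and of $\Sigma^{-}_{\delta'}$ other than those of $\sigma$ have length $<2\rho$, whereas $\|v-v^-\|>2R>2R_{\delta'}\ge2\rho$. Hence throughout this window $v$ has degree exactly $d$ (its $d$ neighbours being the vertices of $\sigma$ — not $v^-$, not $w$), $v^-$ has degree $d+1$ (the $d$ vertices of $\sigma$, plus $w$), each vertex of $\sigma$ has degree $\ge d$ with degree exactly $d+1$ as soon as $\rho>\tfrac12$ (in particular at $\rho\ge\rho_2$, since $\rho_2>r\ge\tfrac12$), and $w$ always has degree $1$. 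All comparisons are strict, so they survive a sufficiently small $\varepsilon$-perturbation of $\pointset$. The sole purpose of $w$ is to break the symmetry between $v$ and $v^-$: without it both would have degree $d$, so no degree threshold could isolate $v$ alone — which is also why $d+3$ points are needed rather than $d+2$.

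I would then restrict $H_{d-1}(\degreeoffset_{\filt}(\pointset))$ to the four-element subposet $P\subset\Rplus\times\Rplus^{\op}$ with $Q_1=(\rho_1,d)$, $Q_2=(\rho_2,d)$, $Q_3=(\rho_3,d)$ and $Q_4=(\rho_2,d+1)$, so that $Q_1<Q_2<Q_3$ and $Q_4<Q_2$ — the shape of the left module of \cref{fig:finite_posets}. By the degree profile, at threshold $d$ the active point set is exactly the $d+2$ simplex vertices, so $\degreeoffset_{\rho,d}(\pointset)$ equals the offset of $\Sigma_\delta\cup_\sigma\Sigma^{-}_{\delta'}$ at radius $\rho$; the choice of the $\rho_i$ makes the active complex the $(d-1)$-sphere bounding the union at $Q_1$ (the facet $\sigma$ still absent), then $\partial\Sigma_\delta\cup_\sigma\partial\Sigma^{-}_{\delta'}\simeq S^{d-1}\vee S^{d-1}$ at $Q_2$, and then $\Sigma_\delta$ filled at $Q_3$ (so only $\partial\Sigma^{-}_{\delta'}$ survives) — exactly as in \cref{lem:existence_cech_non_interval}. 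At $Q_4$ the threshold $d+1$ drops $v$ but keeps $v^-$ and the vertices of $\sigma$, so the active complex is $\partial\Sigma^{-}_{\delta'}\cong S^{d-1}$, mapping isomorphically onto the $[\partial\Sigma^{-}_{\delta'}]$-summand of the $H_{d-1}$ at $Q_2$. Checking the four induced maps (all of the expected ranks) identifies $\restrFilt{H_{d-1}(\degreeoffset_{\filt}(\pointset))}{P}$ with the left module of \cref{fig:finite_posets}, and \cref{thm:non-interval-crit} finishes the argument; the conclusion passes to every sufficiently small $\varepsilon$-perturbation $\pointset'$ because every inequality used was strict.

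I expect the main difficulty to be organizational rather than conceptual: pinning down $w$, the perturbation sizes $\delta>\delta'>0$, and the six radii $r_{\delta'}<\rho_1<r<\rho_2<R_\delta<\rho_3<R_{\delta'}$ so that the active point set at each of the four poset vertices is precisely the intended subset of $\pointset$, and so that all the distance-versus-$2\rho_i$ and circumradius-versus-$\rho_i$ comparisons are strict. The homological bookkeeping is identical to that of \cref{lem:existence_cech_non_interval}; the one genuinely new ingredient is verifying that a single extra point forces $\deg v = d < d+1 = \deg v^- = \deg(\text{vertices of }\sigma)$ uniformly over the radius window $[\rho_1,\rho_3]$.
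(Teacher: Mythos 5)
Your proposal is correct and follows essentially the same route as the paper: augment the glued-simplices configuration of \cref{lem:existence_cech_non_interval} by a single extra point $w$ near the apex of $\Sigma^{-}_{\delta'}$ so that $v$ becomes the unique vertex of degree exactly $d$ in the relevant radius window, then read off the left module of \cref{fig:finite_posets} on the same four-element subposet of $\Rplus\times\Rplus^{\op}$. The only (inessential) difference is the placement of $w$: the paper puts it so close to $v'$ that the offsets of $S$ and $S\cup\{w\}$ deformation-retract onto each other (so $w$ lies in the active set but is homologically invisible), whereas you put it far enough away that $w$ has degree $1$ and is simply excluded from the active set at threshold $d$ --- both choices work.
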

\begin{proof}
  We consider the point set $\pointset$ and the notation of the proof
  of~\cref{lem:existence_cech_non_interval}. Recall that $v$ is the perturbed point of
  $\Sigma_{\delta}$, and $v'$ is that of $\Sigma_{\delta'}^{-}$.

  We construct $S' \coloneqq S\cup\Set{w}$ by placing a new point $w$ close
  to $v'$ in the direction perpendicular to $\sigma$. This distance between $w$ and $v'$ is
  chosen small enough such that, at each scale
  $s\in\Set{r_{\delta'}, r, R_{\delta}}$, the offsets $\offset_{s}(S)$ and
  $\offset_{s}(S')$ are essentially the same: that there is a strong deformation
  retraction from $\offset_{s}(S')$ to $\offset_{s}(S)$. If follows that each
  inclusion $\offset_{s}(S)\hookrightarrow\offset_{s}(S')$ induces an
  isomorphism in $(d-1)$-homology. The same applies to the inclusion
  $\offset_{s}(S\setminus \Set{v})\hookrightarrow\offset_{s}(S'\setminus \Set{v})$.

  Let $P'\subset\Rplus\times\Rplus^{\op}$ be the following subposet, meant to be
  compared to the one in~\cref{fig:regular_simplex_filtration},
  \begin{equation*}
    \begin{tikzcd}[row sep=10pt, column sep=15pt]
      (r_{\delta'}, d) \arrow[r, "<" marking, phantom] & (r, d)\arrow[r, "<" marking, phantom] & (R_{\delta}, d) \\
      & (r, d+1). \arrow[u, "<" marking, phantom] &
    \end{tikzcd}
  \end{equation*}
  Note that at scale $r_{\delta'}$, all points have degree at least $d$: this is clear if
  $d=2$ and, otherwise, $r_{\delta'} > \frac{1}{2}$, so every edge of
  $\Sigma_{\delta}$ and $\Sigma^{-}_{\delta'}$ is present. At scale $r$, $v$ has
  degree $d$ (it is connected only to the $d$ vertices of $\sigma$) and all
  other points have degree at least $d+1$, precisely because we have added $w$.
  By the isomorphisms in $(d-1)$-homology noted above, and noting that
  $\degree_{r_{\delta'}, d}(S') = S'$ and that
  $\degree_{r,d+1}(S') = S'\setminus v$, in parallel to the of
  proof~\cref{lem:existence_cech_non_interval}, we have that the persistence
  module $H_{d-1}(\degreeoffset_{\filt}(S'))$ restricted to $P'$ is isomorphic
  to the left example of~\cref{fig:finite_posets}. The desired statement follows by~\cref{lem:thin-decomp}.

  It is clear that there exists a sufficiently small $\varepsilon > 0$ such that
  the same holds for any $\varepsilon$-perturbation of $S$.
\end{proof}

Using the same argument as before, we obtain:

\begin{theorem}\label{thm:non_interval_degree_cech_random}
  Let $\process_n$ be a Poisson point process in $\R^d$. The persistence module
  $H_k(\degreeoffset_{\filt}(\process_{n}))$ is not interval
  decomposable with high probability.
\end{theorem}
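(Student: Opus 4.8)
The plan is to follow the exact template already used for \cref{thm:non_interval_cech_random}, now that the deterministic ingredient \cref{lem:existence_degree_cech_non_interval} is in place. First I would reduce from arbitrary homology degree $k$ to the top-dimensional case by setting $\tilde d = k+1 \leq d$: take the $(\tilde d+3)$-point configuration $S \subset \R^{\tilde d}$ produced by \cref{lem:existence_degree_cech_non_interval} (with its perturbation radius $\varepsilon$), scale it down, and embed it in $[0,1]^d$. By the triangle inequality, stability under $\varepsilon$-perturbations in $\R^{\tilde d}$ gives stability under $\tfrac{\varepsilon}{2}$-perturbations in $\R^d$, so the embedded copy still yields a non-interval-decomposable $H_{\tilde d - 1}(\degreeoffset_{\filt}(\cdot))$ for every such perturbation.

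Next I would invoke \cref{thm:poisson} to say that, with high probability, $\process_n$ contains an isolated scaled $\tfrac{\varepsilon}{2}$-copy $S'$ of $S$ inside some subcube $Q$, with no other points of $\process_n$ in $Q$, and with the separation guarantee that every point of $S'$ is at distance at least $\diam(S')$ from every point of $\process_n \setminus S'$. Unlike the sublevel-offset case, here there is no random function to control — the degree bifiltration depends only on the point positions — so there is no additional probabilistic event to condition on beyond the existence of $S'$.

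The remaining step is to check that the degree offset bifiltration restricted to the relevant finite subposet $P' \subset \Rplus\times\Rplus^{\op}$ (the one from the proof of \cref{lem:existence_degree_cech_non_interval}, rescaled to $S'$) really does decompose the same way. The key point, as in the proof of \cref{thm:non_interval_cech_random}, is that isolation makes $S'$ behave locally like a standalone point set: for scales $r$ below $\diam(S')$, the degree of any point $x \in S'$ computed within $\process_n$ equals the degree computed within $S'$ alone (since $2r < \diam(S')$ means no point outside $S'$ contributes), so $\degree_{r,k}(\process_n) \cap S' = \degree_{r,k}(S')$; and for those same scales the offset $\offset_r(\degree_{r,k}(\process_n) \cap S')$ is an isolated union of connected components of $\offset_r(\degree_{r,k}(\process_n))$, hence contributes a direct summand in $(\tilde d - 1)$-homology. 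Therefore $H_{\tilde d-1}(\degreeoffset_{\filt}(\restr{}{S'}))$, restricted to $P'$, is a summand of $H_{\tilde d-1}(\degreeoffset_{\filt}(\process_n))$ restricted to $P'$; since that summand is not interval decomposable and $\tilde d = k+1$, \cref{lem:thin-decomp} (via \cref{thm:non-interval-crit}) gives that $H_k(\degreeoffset_{\filt}(\process_n))$ is not interval decomposable.

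The main obstacle — really the only non-cosmetic point — is making sure the degree values are genuinely insensitive to the ambient sample. One must choose the scale window in $P'$ so that every relevant scale $r$ satisfies $2r < \diam(S')$ after rescaling, which is automatic because the subposet of \cref{lem:existence_degree_cech_non_interval} lives at scales proportional to the edge length of the (scaled) simplex and hence to $\diam(S')$; one also has to recall that $\eps$-perturbation does not change which pairs lie within $2r$ for a small enough $\varepsilon$, which was already arranged in the lemma. Everything else is a verbatim repetition of the offset argument, so the proof is short.
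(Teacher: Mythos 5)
Your proposal is correct and follows exactly the route the paper intends: the paper's own ``proof'' of \cref{thm:non_interval_degree_cech_random} is literally ``using the same argument as before,'' and you have filled in the right details --- the reduction to $\tilde d = k+1$, the application of \cref{thm:poisson}, and the key observation that the separation guarantee (distance at least $\diam(S')$, which exceeds twice the largest scale $R_\delta$ in the subposet) makes both the degrees and the offsets of $S'$ insensitive to the rest of $\process_n$. Your remark that no auxiliary random event is needed here, unlike in \cref{thm:non_interval_cech_random}, is also accurate.
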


\subsection{Rips bifiltrations}

A variant of the \v{C}ech complex $\cech_{r}(\pointset)$ for a finite point set
$S\subset\R^{d}$ is the \deff{(Vietoris-)Rips complex} $\rips_{r}(\pointset)$,
whose simplexes are those subsets of $\pointset$ of diameter at most $2r$:
\begin{equation}
  \rips_{r}(\pointset) \coloneqq \Set{\sigma\subset\pointset
  \given \diam\sigma \leq 2r},
\end{equation}
where $\diam\sigma$ is the maximum distance between two points in $\sigma$.
The Rips complex assembles into the \deff{Rips filtration} $\rips_{\filt}(\pointset)$
over $\Rplus$, and,
given a function $\gamma\colon S\to\Rplus$, we define the \deff{sublevel Rips
  bifiltration} $\rips_{\filt}(\gamma)\colon\Rplus\to\cSimp$ by
$\rips_{r,s}(\gamma) \coloneqq\rips_{r}(\gamma^{-1}([0, s]))$.

We now comment on how to extend the results of the previous section to the Rips
setting. The construction
in~\cref{lem:existence_cech_non_interval} does not work immediately. Indeed,
in~\cref{fig:regular_simplex_filtration} at $(r, \gamma(v))\in P$, the Rips
complex of such a point set consists of two (filled) triangles: the $1$-skeleton
(the graph given by its vertices and edges)
of $\cech_{r}(\pointset)$ and $\rips_{r}(\pointset)$ coincide, and
$\rips_{r}(\pointset)$ is given by the cliques of this $1$-skeleton.
Focusing on the case of $\R^{2}$, and referring back to the proof
of~\cref{lem:existence_cech_non_interval},~\cref{fig:rips_cross_filtration} proves by
picture the following lemma:
\begin{lemma}\label{lem:existence_rips_non_interval}
  There is a finite point configuration $\pointset$ in $\R^{2}$, a function
  $\gamma\colon \pointset\to\R$ such that $H_{1}(\rips_{\filt}(\gamma))$ is
  not interval decomposable.
  Moreover, the same holds for any $\varepsilon$-perturbation of $S$, for an
  $\varepsilon > 0$ small enough.
\end{lemma}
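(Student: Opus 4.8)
The plan is to reuse the scheme of the proof of~\cref{lem:existence_cech_non_interval} verbatim: exhibit a concrete point set $S\subset\R^{2}$ together with a function $\gamma\colon S\to\R$, restrict $H_{1}(\rips_{\filt}(\gamma))$ to a four-element subposet $P\subset\Rplus^{2}$ of the same shape as in~\cref{fig:regular_simplex_filtration}, check that the restriction is isomorphic to the left module of~\cref{fig:finite_posets}, and conclude with~\cref{thm:non-interval-crit}; the $\varepsilon$-perturbation clause will then come for free. The one genuine obstruction is that a Rips complex is a flag complex, so a triangle appears the instant its three edges do; in particular the configuration ``the $1$-skeleton of two triangles glued along $\sigma$, neither triangle filled'', which is what creates the $K^{2}$ in~\cref{lem:existence_cech_non_interval}, can never occur here. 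The remedy is to make the $K^{2}$ arise from a genuine loop of length six rather than from two triangles.

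Concretely, I would take $S$ to be the six vertices of a regular hexagon that has been squashed so that one pair of opposite vertices, call them $p$ and $q$, is brought close together, and then perturbed slightly; label the vertices cyclically $q,a,b,p,d,c$, and arrange that $\norm{p-q}$ is smaller than every other diagonal but larger than every edge, and that the ``$c,d$-half'' of the hexagon is stretched by an infinitesimally larger factor than the ``$a,b$-half''. Take $\gamma$ to be any function on $S$ whose unique maximum is attained at $a$. As the scale $r$ grows, one passes through three regimes: (i) the six hexagon edges present and no chord, so the complex is a hollow $6$-cycle and $H_{1}=K$; (ii) additionally the chord $pq$ present --- and, since $p$ and $q$ have no common neighbour along the hexagon, no triangle is thereby created --- so $H_{1}=K^{2}$, spanned by the two $4$-cycles $C_{1}$ (through $q,a,b,p$) and $C_{2}$ (through $p,d,c,q$); (iii) additionally a chord of $C_{1}$ present, which by the half-stretching asymmetry appears strictly before any chord of $C_{2}$, so that the four vertices of $C_{1}$ span a filled tetrahedron while $C_{2}$ remains hollow, whence $H_{1}=K$. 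Choosing $r_{1}<r_{2}<r_{3}$ in these three regimes together with the sublevel value just below $\gamma(a)$ at scale $r_{2}$ --- which deletes exactly the vertex $a$ and leaves $C_{2}$ with a pendant copy of $b$ --- one reads off that $\restrFilt{H_{1}(\rips_{\filt}(\gamma))}{P}$ is the left module of~\cref{fig:finite_posets}: the class of the hexagon maps to $[C_{1}]+[C_{2}]$, i.e.\ to $(1,1)$; the map into regime (iii) is the projection $(x,y)\mapsto y$; and the class of $C_{2}$ from the lower sublevel maps to $(0,1)$.

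The verification then reduces to a finite, picture-level computation of these four Rips complexes and of the induced maps on $H_{1}$ --- which is exactly what~\cref{fig:rips_cross_filtration} records. I expect the only real work to be the geometric realizability statement: that six points can actually be placed in the plane so that at a single scale the present edges are precisely the six of the hexagon plus the antipodal chord $pq$ and nothing more, with the chords of $C_{1}$ and $C_{2}$ entering in the prescribed order as $r$ increases. This is an elementary but mildly delicate comparison among the $\binom{6}{2}$ pairwise distances, and a squashed hexagon does the job. Finally, $\varepsilon$-perturbation stability is immediate: every simplex membership or non-membership used above is of the form ``distance $<2r_{i}$'' or ``distance $>2r_{i}$'' with a fixed positive slack, and the only constraint placed on $\gamma$ is the purely combinatorial ``unique maximum at $a$'', so any sufficiently small perturbation of $S$ leaves all four Rips complexes --- hence the restricted module --- unchanged.
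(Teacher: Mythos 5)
Your argument is correct, and it follows the paper's overall strategy for this family of lemmas --- restrict to the same four-element subposet, identify the restriction with the left module of \cref{fig:finite_posets}, and invoke \cref{thm:non-interval-crit} --- but with a different witness configuration, which is the actual content of the statement. The paper's point set (\cref{fig:cross_construction}) is assembled from two scaled copies of the planar cross-polytope $\cross_{2}$ glued along a split central pair; that choice is made precisely so that the construction generalizes to $(d-1)$-homology of cross-polytopes in $\R^{d}$, as remarked after the lemma. Your squashed hexagon is a six-point, specifically planar alternative realizing the same combinatorial situation of two $4$-cycles sharing the single edge $pq$. The realizability step you flag as the only delicate point does go through: starting from the regular hexagon inscribed in the unit circle with $p,q$ antipodal and compressing along the $pq$-axis by a factor $\lambda$, the required chain of strict inequalities (all six edges shorter than $\norm{p-q}$, which in turn is shorter than every remaining diagonal) holds exactly for $\lambda\in(1/\sqrt{5},\sqrt{3/7})$, e.g.\ $\lambda=1/2$, and an infinitesimal stretch of the $c,d$-half then separates the two pairs of short diagonals in the prescribed order; since every inequality is strict, the $\varepsilon$-stability clause follows. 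Two minor simplifications: in regime (iii) you do not need the full $3$-simplex on the vertices of $C_{1}$ --- a single chord of $C_{1}$ already creates the two flag triangles that make $[C_{1}]$ null-homologous while leaving $[C_{2}]$ untouched --- and the middle rank can be checked without drawing anything, since the $6$-vertex, $7$-edge, triangle-free complex of regime (ii) has $b_{1}=2$ by Euler characteristic. With those computations of the four complexes and the maps $[{\rm hexagon}]\mapsto(1,1)$, $(x,y)\mapsto y$, and $[C_{2}]\mapsto(0,1)$, your restricted module is exactly the left module of \cref{fig:finite_posets}, so the proof is complete.
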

\begin{figure}
  \centering \includegraphics{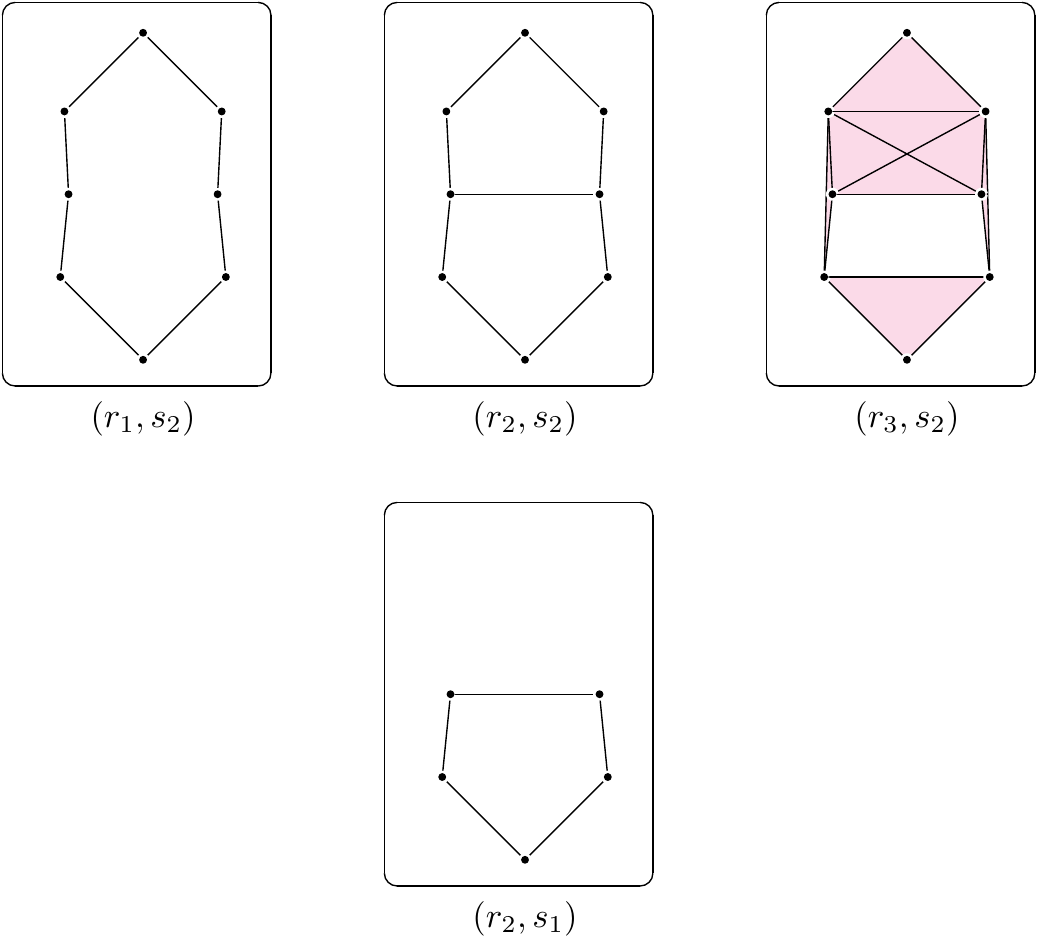}
  \caption{A point set $S$ in the plane and its associated sublevel Rips
    bifiltration restricted to a finite subposet. Shaded triangles are
    the $2$-simplices of the Rips complex.}\label{fig:rips_cross_filtration}
\end{figure}
\begin{figure}
  \centering \includegraphics{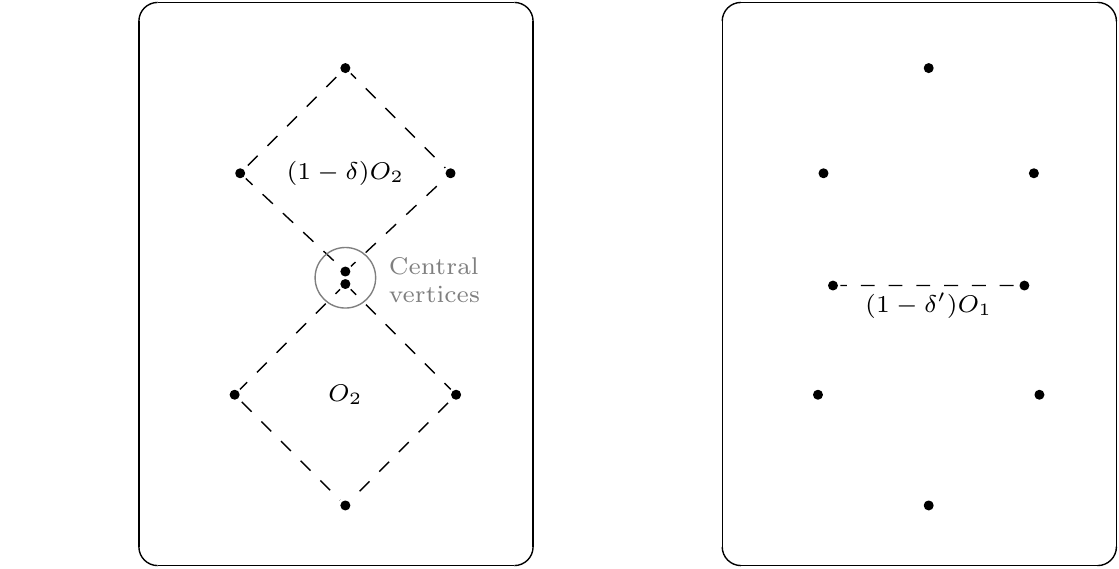}
  \caption{How to construct the point set of~\cref{fig:rips_cross_filtration}.
    On the left, a copy of $(1-\delta)\cross_{2}$ and $\cross_{2}$ lying side by
    side. On the right, the marked central vertices are replaced by
    $(1-\delta')\cross_{1}$.}\label{fig:cross_construction}
\end{figure}

The example $\pointset$ uses as building block the $d$-dimensional
cross-polytope in $\R^{d}$: the convex hull of
the $2d$ points $\Set{\pm e_{1}, \dots, \pm e_{d}}$, where $e_{1}, \dots, e_{d}$ are the
endpoints of the standard basis vectors. We denote the vertex set of the $d$-dimensional
cross-polytope by $\cross_{d}$. Note that, for all $r\in [\sqrt{2}/2, 1)$,
$\rips_{r}(\cross_{d})$ is the boundary of the $d$-cross-polytope, which implies
that
$H_{d-1}(\rips_{r}(\cross_{d}))\isomorphic K$, and, for $r=1$, we have that $\rips_{1}(O_d)$
contains every subset of $O_{d}$.

The point set $\pointset$ consists of a copy of $\cross_{2}$ and a scaled-down
version $(1-\delta) \cross_{2}$, for a small enough $\delta > 0$, lying side by
side, see~\cref{fig:cross_construction}, where we replace the two central
vertices by a copy $(1-\delta')\cross_{1}$, with a sufficiently small
$\delta' > \delta$. An analogous constructions works in higher dimensions
$\R^{d}$, $d > 2$, and $(d-1)$-homology; we omit  the details.

The arguments used to prove the main theorems of~\cref{sec:cech} apply in the
same way to the example of~\cref{lem:existence_rips_non_interval} and thus to
the Rips setting.

\subsection{Zero-dimensional homology and clustering}
\label{sec:zero_dim}

We now handle the case of zero-dimensional homology. Here, the Rips and \v{C}ech
complexes coincide, $H_{0}(\rips_{r}(S)) = H_{0}(\cech_{r}(S))$, since both have
the same $1$-skeleton, and, by the nerve
theorem~\cite{bauerUnifiedViewFunctorial2023a},
$H_{0}(\cech_{r}(S))\isomorphic H_{0}(\offset_{r}(S))$. We use the same strategy as
before:
\begin{lemma}\label{lem:existence_clustering_non_interval}
  There is a finite point configuration $\pointset$ in $\R^{2}$ such
  that $H_{0}(\offset_{\filt}(\gamma))$ is not interval decomposable.
  The same holds for any $\varepsilon$-perturbation of $S$, with
  $\varepsilon > 0$ small enough.
\end{lemma}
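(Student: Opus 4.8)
The plan is to follow the same template as the other subsections: produce an explicit finite point set $\pointset\subset\R^{2}$ together with a function $\gamma$, restrict $H_{0}(\offset_{\filt}(\gamma))$ to a carefully chosen finite subposet $P\subset\Rplus^{2}$, exhibit the right-hand indecomposable of \cref{fig:finite_posets} as a direct summand of $\restrFilt{H_{0}(\offset_{\filt}(\gamma))}{P}$, and apply \cref{thm:non-interval-crit}; stability under perturbation will then be automatic, since every condition imposed on $\pointset$ and $\gamma$ is a strict inequality on interpoint distances or on $\gamma$-values. Concretely, we use five points: three ``cluster'' points $w_{1},w_{2},w_{3}$, pairwise far apart, and two ``bridge'' points $p,q$ forming a short chain from $w_{1}$ to $w_{2}$, so that $w_{1}$--$p$--$q$--$w_{2}$ becomes connected at a small radius $r_{\circ}$ while $w_{1},w_{2}$ remain more than $2R$ apart for a larger radius $R$; the geometry is arranged so that at radius $R$ the only direct link among the clusters is $w_{1}$--$w_{3}$ and the chain $p,q$ plays no role there. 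We take $\gamma$ with $\gamma(w_{2}),\gamma(w_{3})<\gamma(w_{1})<\gamma(p),\gamma(q)$, so that as the density threshold grows the active set passes through $\{w_{2},w_{3}\}\subset\{w_{1},w_{2},w_{3}\}\subset\{w_{1},w_{2},w_{3},p,q\}$.

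For the subposet we take $P=\{b_{0}<b_{1}<b_{2},\ b_{1}<b_{3}\}$, the support poset of the right-hand module of \cref{fig:finite_posets} (with $b_{2},b_{3}$ incomparable), placed so that: $b_{0}$ has small radius and low threshold, with active set $\{w_{2},w_{3}\}$ and $H_{0}$ two-dimensional; $b_{1}$ has small radius and a threshold just above $\gamma(w_{1})$, with active set $\{w_{1},w_{2},w_{3}\}$ and $H_{0}$ three-dimensional; $b_{2}$ has radius $r_{\circ}$ and the highest threshold, so the chain is active and merges $w_{1},w_{2}$, leaving components $\{w_{1},w_{2}\}$ and $\{w_{3}\}$; and $b_{3}$ has the larger radius $R$ but a threshold below $\gamma(p)$, so the chain is inactive while the link $w_{1}$--$w_{3}$ is present, leaving components $\{w_{1},w_{3}\}$ and $\{w_{2}\}$. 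In the basis of connected components, $\restrFilt{H_{0}(\offset_{\filt}(\gamma))}{P}$ then has dimension vector $(2,3,2,2)$; the map $b_{1}\to b_{2}$ realizes the partition $\{w_{1},w_{2}\}\mid\{w_{3}\}$ of the three components at $b_{1}$, the map $b_{1}\to b_{3}$ realizes the partition $\{w_{1},w_{3}\}\mid\{w_{2}\}$, and $b_{0}\to b_{1}$ is the inclusion $\langle w_{2}\rangle\oplus\langle w_{3}\rangle\hookrightarrow\langle w_{1}\rangle\oplus\langle w_{2}\rangle\oplus\langle w_{3}\rangle$.

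The decomposition is then pure linear algebra over $P$. Inside $H_{0}$ at $b_{1}$, the two-dimensional zero-sum subspace $\{aw_{1}+bw_{2}+cw_{3}:a+b+c=0\}$ is a subrepresentation $A$: the kernels of its two outgoing maps are the distinct lines $\langle w_{1}-w_{2}\rangle$ (toward $b_{2}$) and $\langle w_{1}-w_{3}\rangle$ (toward $b_{3}$), while the incoming map from $b_{0}$ hits the third line $\langle w_{2}-w_{3}\rangle$; in suitable bases the three structure maps of $A$ become $\tinymatrix{1\\1}$, $\tinymatrix{0&1}$, $\tinymatrix{1&0}$, i.e.\ $A$ is exactly the right-hand module of \cref{fig:finite_posets}. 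A pointwise complement generated by the symmetric vectors ($w_{2}+w_{3}$ at $b_{0}$ and $b_{1}$, the sum of the two components at each of $b_{2}$ and $b_{3}$) is readily checked to be a subrepresentation $B$, so $\restrFilt{H_{0}(\offset_{\filt}(\gamma))}{P}\cong A\oplus B$, and \cref{thm:non-interval-crit} gives the claim. Since all the geometric constraints and the ordering of $\gamma$-values are strict, the same $\gamma$ works for any sufficiently small $\varepsilon$-perturbation of $\pointset$.

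The main obstacle, and the reason this case is not a routine instance of the earlier arguments, is the rigidity of $0$-homology: in the component basis every transition map of $H_{0}(\offset_{\filt}(\gamma))$ is a $0/1$ matrix with exactly one $1$ per column, so neither module of \cref{fig:finite_posets} can be realized as the \emph{entire} restriction (a short argument shows the two legs meeting the central vertex would be forced to coincide). One therefore has to create extra connected components so that the desired indecomposable appears only as a summand, and the real work is the geometric bookkeeping that makes the two incomparable nodes $b_{2},b_{3}$ induce genuinely different partitions of the three components present at $b_{1}$ — which is exactly why one needs a chain of bridge points, rather than a single bridge point, between the far-apart clusters $w_{1}$ and $w_{2}$.
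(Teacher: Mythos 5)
Your proposal is correct and follows essentially the same route as the paper: the paper's proof uses a four-point configuration (one bridge point at a high $\gamma$-value instead of your two-point chain) and the same four-element subposet with dimension vector $(2,3,2,2)$, splitting the restricted module as the right-hand indecomposable of \cref{fig:finite_posets} plus a thin summand, exactly as you do. One small quibble: your explicit complement spanned by the ``symmetric vectors'' fails in characteristic $2$ (there $w_{2}+w_{3}=w_{2}-w_{3}$ already lies in $A$); replacing it by, say, the span of the class of $w_{2}$ at each vertex gives a complement over any field.
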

\begin{proof}
  Consider the point set $\pointset$ of~\cref{fig:obstruction_uob_dim_0}, and a
  function $\gamma\colon S\to\Rplus$ such that
  $\gamma(A) < \gamma(B) < \gamma(C) < \gamma(D)$. Let $P\subset\Rplus^{2}$ be
  the subposet given by $(0, \gamma(B))$, $(0, \gamma(C))$, $(3.2, \gamma(C))$,
  and $(2.7, \gamma(D))$, which is also described
  in~\cref{fig:obstruction_uob_dim_0}.

  Recalling that $\offset_{r,s}(\gamma)$ is generated by the set of connected
  components, and that, for any two $(r,s)\leq (r',s')$, the map
  $\offset_{(r,s) \to (r',s')}(\pointset)$ is induced by the inclusion of
  connected components, one can see that $H_{0}(\offset_{\filt}(\gamma))$
  restricted to $P$ is isomorphic to and decomposes as:
  \begin{equation*}
   \begin{tikzcd}[ampersand replacement=\&,row sep=25pt, column sep=25pt]
      K^{2} \& \\
      K^{3} \uar{\tinymatrix{1 & 0 & 1 \\ 0 & 1 & 0}}\rar{\tinymatrix{1 & 0 & 0 \\ 0 & 1 & 1}} \& K^{2} \\
      K^{2} \uar{\tinymatrix{1 & 0  \\ 0 & 1 \\ 0 & 0}} \&
    \end{tikzcd}
    \quad
    \isomorphic
    \quad
   \begin{tikzcd}[ampersand replacement=\&,row sep=25pt, column sep=25pt]
      K \& \\
      K \uar{\tinymatrix{1}}\rar{\tinymatrix{1}} \& K \\
      K \uar{\tinymatrix{1}} \&
    \end{tikzcd}
    \quad
    \oplus
    \quad
    \begin{tikzcd}[ampersand replacement=\&,row sep=25pt, column sep=25pt]
      K \& \\
      K^{2} \uar{\tinymatrix{0 & 1}}\rar{\tinymatrix{1 & 0}} \& K \\
      K. \uar{\tinymatrix{1 \\ 1}}\&
    \end{tikzcd}
  \end{equation*}
  \cref{thm:non-interval-crit} yields that
  $H_0(\offset_{\filt}(\pointset))$ is not interval-decomposable, as required.
  Finally, it is clear that the indecomposables do not change by perturbing the
  points by a small enough $\varepsilon$.
\end{proof}
\begin{figure}
\centering
\includegraphics{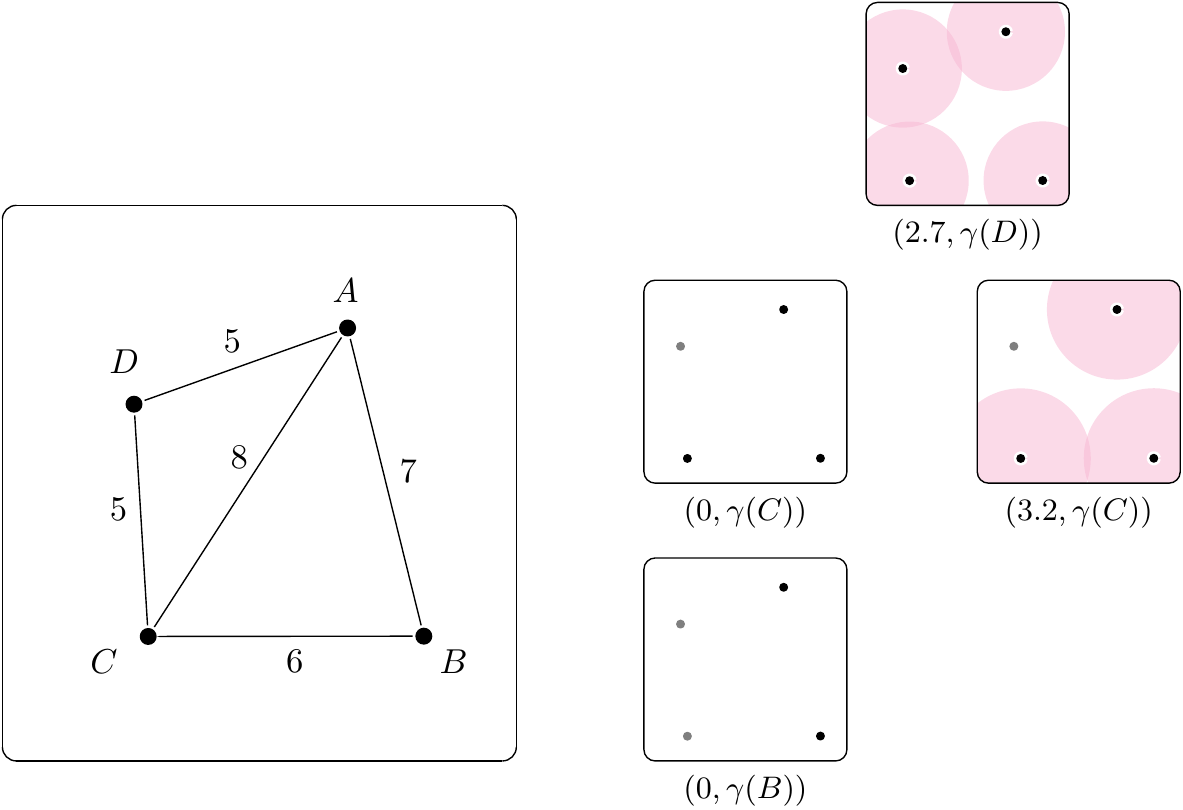}
\caption{The point set $\pointset\subset\R^{2}$ of the proof
  of~\cref{lem:existence_clustering_non_interval}. On the right we describe the
  offset bifiltration restricted to the subposet $P\subset\Rplus^{2}$ of
  the same proof.}\label{fig:obstruction_uob_dim_0}
\end{figure}

The same arguments used in the theorems of~\cref{sec:cech} apply to the
example above.

\subparagraph{Multicover} We now show that multicover bifiltrations
have non-interval-decomposable $0$-homology persistence modules with probability
going to $1$. The \deff{multicover bifiltration}
$\cover_{\filt}(S)\colon \Rplus\times\Rplus^{\op}\to\cTop$ of a finite point set
$S\subset\R^{d}$ is given by
\begin{equation*}
  \cover_{r,k}(S) \coloneqq \Set{y\in\R^{d}\given \norm{y -x} \leq r \text{ for at least $k$ points of $x\in S$}},
\end{equation*}
that is, $\cover_{r,k}$ is the region of $\R^d$ that is covered by at least $k$
of the $r$-balls centered at the points in $S$. Note that
$\cover_{r,1}(S) = \offset_{r}(S)$, and that the multicover bifiltration is sensitive to
density.~\cref{fig:multicover} shows an example for various $r$ and $k$.

\begin{lemma}\label{lem:existence_multicover_non_interval}
  There is a finite point configuration $\pointset$ in $\R^{2}$ such that
  $H_{0}(\cover_{\filt}(S))$ is not interval decomposable.
  The same holds for any $\varepsilon$-perturbation of $S$, with
  $\varepsilon > 0$ small enough.
\end{lemma}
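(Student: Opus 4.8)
The plan is to reuse the template of the preceding lemmas: produce a finite $S\subset\R^2$ and a four-element subposet $P\subset\Rplus\times\Rplus^{\op}$ on which $H_0(\cover_{\filt}(S))$ restricts to a module that, after an internal direct-sum splitting, contains one of the indecomposables of~\cref{fig:finite_posets}, so that~\cref{thm:non-interval-crit} applies. The one genuinely new point is that the multicover bifiltration already carries two essential parameters, so there is no auxiliary density function to prescribe as in~\cref{lem:existence_clustering_non_interval}; instead the coverage parameter $k$ records local multiplicity, so I encode the density profile used in that lemma by placing around each of four sites $\ell_A,\ell_B,\ell_C,\ell_D$ a cluster of exactly $m_A=4$, $m_B=3$, $m_C=2$, $m_D=1$ points confined to a ball of tiny radius $\rho$ about the site.

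I place the four sites (on a line, say) so that, for two reference radii $\rho_2<\rho_1$ with $\rho_2<\rho_1<2\rho_2$ and with $\rho$ much smaller than every gap below: $\ell_D$ is the midpoint of $\ell_A$ and $\ell_C$, with $2\rho_1<\|\ell_A-\ell_C\|\le 4\rho_2$ (hence $\|\ell_A-\ell_D\|,\|\ell_C-\ell_D\|\le 2\rho_2$); $\ell_B$ lies beyond $\ell_C$ with $2\rho_2<\|\ell_B-\ell_C\|\le 2\rho_1$; and $\ell_B$ is far from both $\ell_A$ and $\ell_D$. Because $\rho$ is negligible, a point of $\R^2$ lies in $\cover_{r,k}(S)$ precisely when the multiplicities of the sites within distance $r$ of it sum to at least $k$, and since $m_D=1$ the site $D$ can contribute only at $k=1$. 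I then take $P$ to be the subposet consisting of $(\rho_0,3)<(\rho_0,2)<(\rho_1,2)$ together with $(\rho_0,2)<(\rho_2,1)$, where $\rho_0\ll\rho_2$ and where $(\rho_1,2)$ and $(\rho_2,1)$ are incomparable because $\rho_1>\rho_2$. At each node of $P$ the region $\cover_{r,k}(S)$ has the same connected components as the offset $\bigcup\{B_r(\ell_X):m_X\ge k\}$, except that at $(\rho_2,1)$ the ball around $D$ additionally joins the $A$- and $C$-blobs. Reading this off, $\dim H_0$ equals $2,3,2,2$ at the four nodes: the $A$- and $B$-blobs persist from $(\rho_0,3)$ into $(\rho_0,2)$, with a new $C$-blob appearing; from $(\rho_0,2)$ the $B$- and $C$-blobs merge at $(\rho_1,2)$ while $A$ stays isolated; and from $(\rho_0,2)$ the $A$- and $C$-blobs merge (through $D$) at $(\rho_2,1)$ while $B$ stays isolated. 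These are exactly the $0/1$ transition matrices of the proof of~\cref{lem:existence_clustering_non_interval}, so the restricted module is the one computed there, which splits as a thin module plus the right-hand indecomposable of~\cref{fig:finite_posets}; then~\cref{thm:non-interval-crit} gives that $H_0(\cover_{\filt}(S))$ is not interval decomposable. Stability under $\eps$-perturbation is immediate, as the presence of a site at each node is fixed by its multiplicity and every ball overlap or non-overlap used above is a strict inequality.

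The step that needs real care, and the only place this argument departs from the offset case, is the identification of $\cover_{r,k}(S)$ with a union of balls: in the multicover the point of cluster $D$ never disappears, so one must check that at the two $k=2$ nodes it creates neither a new region nor a new connection (immediate from $m_D=1<2$, with no constraint on the radius), and that at the $k=3$ node no spurious region arises near the two below-threshold sites $C$ and $D$ (true since $\rho_0\ll\rho_2$ keeps $B_{\rho_0}(\ell_C)$ and $B_{\rho_0}(\ell_D)$ disjoint, even though $m_C+m_D\ge 3$), while at $k=1$ the $D$-ball is precisely what bridges $A$ and $C$ at radius $\rho_2$. Once this weighted-coverage bookkeeping is pinned down, the remaining combinatorics is identical to that of~\cref{lem:existence_clustering_non_interval}.
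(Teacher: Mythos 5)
Your construction is correct, but it is a genuinely different one from the paper's. The paper exhibits a small ad hoc point set and a subposet with nodes $(0.6,2)<(1.2,2)<(2,1)$ and $(1.2,3)<(1.2,2)$ on which the restriction of $H_0(\cover_\filt(S))$ is \emph{already} the left indecomposable of~\cref{fig:finite_posets} (dimensions $1,2,1,1$; no splitting needed). You instead transport the construction of~\cref{lem:existence_clustering_non_interval} into the multicover setting by encoding the density function as cluster multiplicities $4,3,2,1$ at four collinear sites, so that the coverage parameter $k$ plays the role of the sublevel parameter; your restriction is the $2,3,2,2$-dimensional module of that lemma, which splits off the right indecomposable of~\cref{fig:finite_posets}. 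Your approach buys a systematic reduction — any sublevel-offset obstruction with integer density values can be simulated in the multicover by multiplicity — at the cost of more points ($10$) and of the extra bookkeeping you correctly flag: one must check that cross-cluster $k$-fold coverage neither creates spurious components nor bridges blobs. Your handling of this is sound; in particular, at the $k=2$ nodes every doubly covered point lies within $r$ of two points of $S$ of which at least one belongs to a cluster of multiplicity $\geq 2$, so $\cover_{r,2}(S)$ is (up to the $\rho$-fattening) exactly $B_r(\ell_A)\cup B_r(\ell_B)\cup B_r(\ell_C)$ and the $D$-lenses are absorbed into $B_r(\ell_A)$ and $B_r(\ell_C)$ without joining them. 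The distance constraints are mutually consistent (e.g.\ $\rho_2=1$, $\rho_1=1.5$, $\|\ell_A-\ell_C\|=3.5$, $\|\ell_B-\ell_C\|=2.5$), and all comparisons are strict, so the $\eps$-perturbation claim follows as in the paper. The paper's proof is shorter and uses fewer points; yours is more mechanical and reusable.
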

\begin{proof}

  \begin{figure}
    \centering \includegraphics{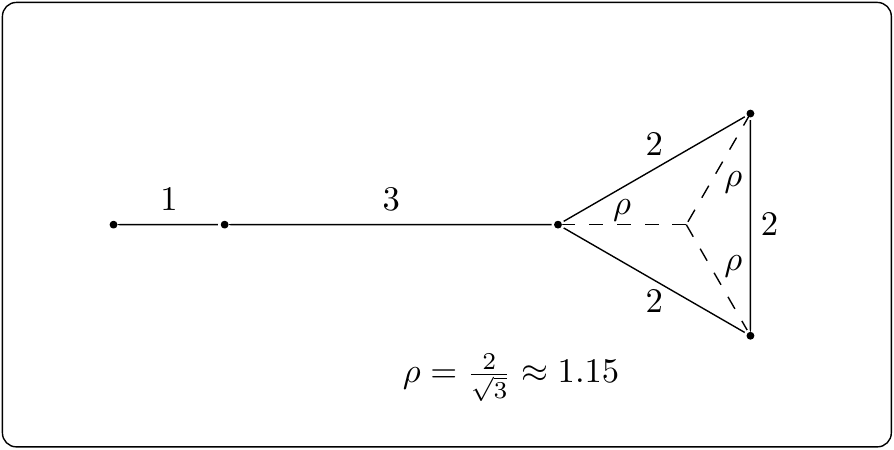}
    \caption{The point set $S\subset\R^{2}$
      of~\cref{lem:existence_multicover_non_interval}. The edges mark the distance
      between the points.}\label{fig:multicover_points}
  \end{figure}

  The point set $S$ is the one described in~\cref{fig:multicover_points}. Let
  $P\subset\Rplus\times\Rplus^{\op}$ be the subposet
  \begin{equation*}
    \begin{tikzcd}[row sep=10pt, column sep=15pt]
      & (r = 2, k = 1) \\
      (r = 0.6, k = 2) \arrow[r, "<" marking, phantom] & (r = 1.2, k = 2) \arrow[u, "<" marking, phantom] \\
      & (r = 1.2, k = 3) \arrow[u, "<" marking, phantom].
    \end{tikzcd}
  \end{equation*}
  We draw $\cover_{p}(S)$ for each $p\in P$ in~\cref{fig:multicover}. Tracking
  how the connected components merge, it can be seen that
  $\restrFilt{H_{0}(\cover_{\filt}(S))}{P}$ is isomorphic to
  \begin{equation*}
    \begin{tikzcd}[ampersand replacement=\&,row sep=15, column sep=15pt]
      \& K \\
      K \rar{\tinymatrix{1 \\ 0}} \& K^{2} \uar[swap]{\tinymatrix{1 & 1}}  \\
      \& K     \uar{\tinymatrix{0 \\ 1}},
    \end{tikzcd}
    \quad\isomorphic\quad
    \begin{tikzcd}[ampersand replacement=\&,row sep=15, column sep=15pt]
      \& K \\
      K \rar{\tinymatrix{1 \\ 1}} \& K^{2} \uar[swap]{\tinymatrix{0 & 1}}  \\
      \& K     \uar{\tinymatrix{0 \\ 1}}.
    \end{tikzcd}
  \end{equation*}
  Comparing this module with the left example
  of~\cref{fig:finite_posets},~\cref{thm:non-interval-crit} yields that
  $H_{0}(\cover_{\filt}(S))$ is not interval decomposable. This holds for any small enough $\eps$-perturbation of the points.
\end{proof}

\begin{figure}
  \centering \includegraphics{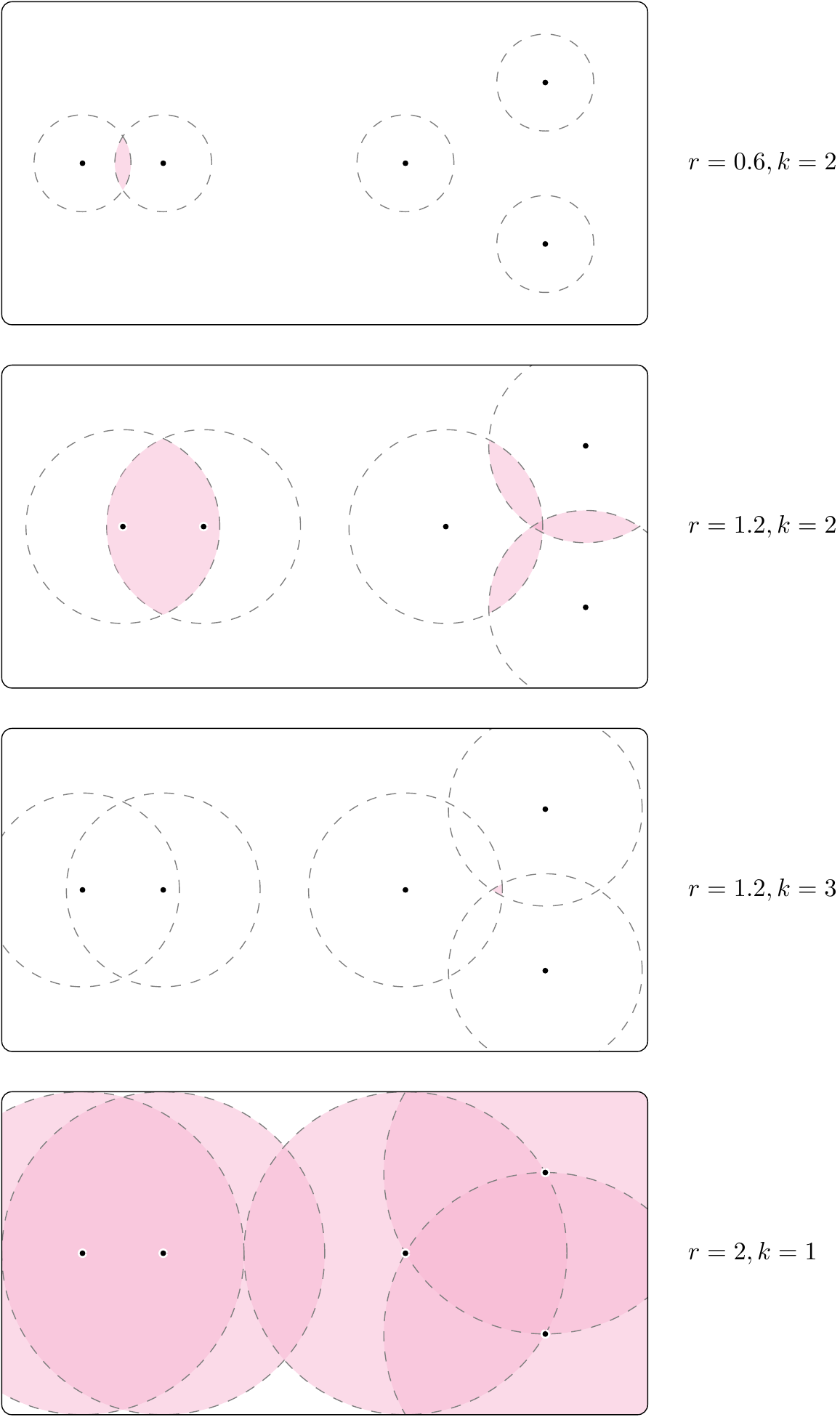}
  \caption{The multicover bifiltration $\cover_{\filt}(S)$ of the point set
    $S\subset\R^{2}$ of~\cref{lem:existence_multicover_non_interval}.}\label{fig:multicover}
\end{figure}

\section{Conclusion}
Building on the empirical observation that in most cases, bifiltrations do not admit a complete decomposition
into intervals, we showed that this occurs with probability going to 1 whenever there is random sampling.
 We focused on the intervals
(or rather thin decomposables), but our technique extends
to prove the presence of arbitrarily complicated indecomposables:
if one can give an $\eps$-stable example for a point set
for which the persistence module of its bifiltration contains that
indecomposable when restricted to a subposet, then a Poisson point process
will contain that indecomposable with probability going to $1$. From these results, there are many interesting directions and
 open questions one can consider.

 First, our examples occur due to noise, and so have short lifetimes. A natural question from the
  perspective of TDA is whether approximations of bifiltrations or persistence modules,
through discretization or the erosion strategy by Bjerkevik~\cite{Bjerkevik},
 would yield a different expected decomposition structure. Furthermore,
 we have only scratched the surface of possible probabilistic questions.
 While we show existence, we would like to understand of the distribution of
 non-interval summands. Within the range of parameters corresponding to ``noise'', what is the most persistent non-interval summand?  Does
  the number of such summands obey a central limit theorem or other universality law, e.g.\ as it is conjectured in the single parameter case for geometric random complexes~\cite{bs-universality}?

  Though we cover several settings, there are some remaining. For example, while
  we covered a simple kernel density estimator, we expect the same results to
  hold for more general kernels, even though the probabilistic elements of the proof
  become much more delicate; we leave this case for future work. Additionally, can
  one show similar results for more general processes, such as binomial or determinantal
  processes where there is dependence between the points, or random functions
  such as Gaussian random fields? In such cases, significantly more advanced
  probabilistic techniques will be needed.

\bibliographystyle{plainurl_fulljournal}

\appendix{}

\section{Proof of~\cref{cor:poisson}}\label{sec:proof_non_homogeneous}

 Denote the cube which satisfies the properties above by $Q\subset \R^d$ and denote its side length by $q$. The point process restricted to $Q$ is also a non-homogenous Poisson process.
Let $f^\uparrow = \max_{x\in Q} f(x)$ and $f^{\downarrow}= \min_{x\in Q} f(x) $. We note that $f^{\downarrow}>0$ by the assumption on $Q$ and $f^{\downarrow}<\infty$ by the assumption on continuity.
We consider the subcube $S$ with side length $(qn)^{-1/d}$.

The probability that the scaled ball $B_i'$ contains
exactly one point is a Poisson variable with rate
$ f^{\downarrow}V(B_i') \leq  \int_{B_i'} f(x)dx \leq f^{\uparrow}V(B_i') $
where $V(B_i')$ denotes the volume of the ball. Then, the probability of exactly
one point occurring in $B_i'$ may be lower bounded by
\[ \Prob(\mu_i \mbox{ points in } B_i') \leq  \frac{\left(f^{\downarrow}V(B_i')\right)^\mu_i e^{-f^{\uparrow}V(B_i') }}{\mu_i!}. \]
By the assumptions on $f$ and the choice of scaling of $B_i'$, the above is lower bounded by a strictly positive constant independent of $n$.

Likewise, the probability of the complement of the scaled points contain no points is a Poisson distributed random variable with rate
$$\int_{T} f(x)dx \leq \int_{Q} f(x)dx \leq f^{\uparrow}V(Q),$$
where $V(Q)$ is the volume of $Q$.
Hence we can lower bound the probability there are no points in the complement by $e^{-f^{\uparrow}V(Q)n}$ which is a strictly positive constant independent of $n$.

The proof follows as in Theorem~\ref{thm:poisson}.

\bibliography{refs}

\end{document}